\documentclass[11pt,leqno]{amsart}
\usepackage{graphicx} 
\usepackage{amssymb, amsmath, latexsym, amsthm, amscd,epsfig, amsfonts, dsfont}
\usepackage{enumerate}
\usepackage{enumitem}
\usepackage{mathtools}
\usepackage{bbm}

\usepackage[left=2.5cm,right=2.2cm,top=2.0cm,bottom=3.5cm]{geometry}

\usepackage[colorlinks=false,unicode=true]{hyperref}
\usepackage{autonum}

\numberwithin{equation}{section}

\newtheorem{theorem}{Theorem}[section]

\newtheorem{definition}[theorem]{Definition}

\newtheorem{proposition}{Proposition}[section]

\newtheorem{lemma}[theorem]{Lemma}
\DeclareMathOperator{\GFF}{GFF_0}

\DeclareMathOperator{\supp}{supp}
\DeclareMathOperator{\spn}{span}
\DeclareMathOperator{\cov}{cov}

\begin{document}
\title[DGFF via Hadamard]%
{Discrete Gaussian Free Field via Hadamard's formula}

\author[H.~Hedenmalm]{Haakan Hedenmalm}
\address[H.~Hedenmalm]{Department of Mathematics and 
Computer Science, St. Petersburg University,
Russia, 14 line of the VO, 29B, 199178 St. Petersburg, Russia;
Beijing Institute of Mathematical Sciences and Applications, 
101408,  Hefangkou Village, Huaibei Town, Huairou District, 544, 
Beijing, China;
Department of Mathematics and Statistics, University of Reading, 
U. K. }
\email{haakan00@gmail.com}
\author[P.~Mozolyako]{Pavel Mozolyako}

\address[P.~Mozolyako]{Department of Mathematics and 
Computer Science, St. Petersburg University,
Russia, 14 line of the VO, 29B, 199178 St. Petersburg, Russia;
Beijing Institute of Mathematical Sciences and Applications, 
101408,  Hefangkou Village, Huaibei Town, Huairou District, 544, 
Beijing, China.
}
\email{pmzlcroak@gmail.com}

\author[D.~Panov]{Daniil Panov}

\address[D.~Panov]{Department of Mathematics and Computer 
Science, St. Petersburg University,
Russia, 14 line of the VO, 29B, 199178 St. Petersburg, Russia}
\email{panovdan2003@gmail.com}

\thanks{The work is supported by Ministry of
Science and Higher Education of the Russian Federation under agreement 
No. 075-15-2025-013}
\subjclass[2020]{31C20, 60G60}

\begin{abstract}
We present a novel way of constructing the Gaussian Free Field on a weighted graph via a 
dynamical expansion of the Green function along an expanding collection of subgraphs. 
Along the way we obtain the discrete analogue of the classical Hadamard variational formula 
regarding the variation of the Green function under infinitesimal variations of the domain. In 
order to develop necessary machinery we construct expanding bases of the naturally
associated energy spaces. An interesting observation is that both our discrete Hadamard 
variational formula and the related construction of the discrete Gaussian Free Field 
are completely dimension-free and do not require smoothness of any kind. 
The graph model contains geometric information via the edges which supply the discrete 
topological information, and by conductances which give metric information. Going to a 
continuum limit, we would then obtain continuous version of the Hadamard variational formula 
and the associated Hadamard operator in e.g. fractal geometries of arbitrary dimension. 
\end{abstract}

\maketitle
\section{Introduction}\label{S:Intro}
\subsection{The DGFF process}
The Discrete Gaussian Free Field (DGFF) on a graph $\Gamma$ is an analogue of the classical 
Gaussian Free Field (GFF), which appears naturally as a discrete approximation of the latter, while 
being a popular object of research in its own right (see \cite{DLP}, \cite{B}). It can be defined in 
many ways, one of these is a random linear combination of an orthonormal basis $\{\psi_{k}\}$ 
of the discrete Sobolev space $H^1_0(U)$ by $\Psi = \sum_{k}\xi_k\psi_k$, where $\xi_k$ is 
a sequence of independent standard real-valued Gaussian variables. The covariance structure 
of the discrete stochastic field 
$\Psi$ is given by the Green function of the subgraph $U$. 

\subsection{Growing the DGFF along a discrete foliation}
In 2014, Hedenmalm and Nieminen \cite{HN} studied the GFF in the continuous
setting of a planar domain. They found an interpretation of the classical variational formula of 
Hadamard, which gives the change of the Green function under a perturbation of the domain. 
The interpretation gives a recipe for how to build the GFF process along a lamination of the domain
by continuously adding independent White Noise (WN) contributions, extended harmonically to the
interior.  
The objective of this note is to construct a similar scheme in the  discrete context on a fairly general 
graph -- to grow the DGFF in a layer-by-layer fashion by adding (weighted) 
harmonic extensions of the White Noise Field (WNF) based on a discrete version of  
Hadamard's formula.
Since our construction holds under very general assumptions on the graph, the conclusion is 
that we may infer that a more general theorem should be valid in any reasonable continuum limit of 
our graphs, such as on surfaces or manifolds (even fractal) in higher dimensions. 

\subsection{Structure of the paper}
The paper is organized as follows. In Section \ref{S:DSN}, we  collect the necessary background 
material regarding definitions and related facts. Then, in Section \ref{S:ONB}, we introduce the 
DGFF and WNF. We also outline the general plan of the DGFF growth algorithm on graphs. 
The precise algorithmic construction is presented in detail in Section \ref{S:DHF}, where, in 
particular, we obtain the discrete Hadamard formula (Theorem \ref{Hadamard}). Our main result, 
stated in Theorem \ref{t:main51}, asserts that the DGFF can be viewed as the result of adding 
independent discrete WNF harmonic waves along the chosen growth pattern, analogously
to what happens in the continuous setting. The details 
are presented in Section \ref{S:GDGFF}. Finally, in Section \ref{S:PDGFF}, we discuss some 
properties of the DGFF that are immediate from the construction.

\section{Discrete setting and notation}\label{S:DSN}

\subsection{Purpose of this section}
Here, we introduce the discrete setting we are going to work with and the related notation.

\subsection{Graphs with conductances}
Let $\Gamma=(V,E)$ be a connected locally finite graph. Here $V$ stands for the collection 
of vertices, and $E$ for the collection of edges. We will frequently write $V = V(\Gamma)$ and 
$E = E(\Gamma)$ to indicate that $V$ and $E$ are associated with the given graph $\Gamma$. 
Edges connect a pair of vertices, and we write $e = (x,y)$ for the edge connecting vertex $x$ 
with the different vertex $y$. For instance, we assume that for any $x\in V$: $(x,x)\notin E$ 
and for any edge $e=(x,y)$ there is a reverse edge $(y,x)\in E$, denoted by $(-e)$. In other 
words, the set $E$ of edges is symmetric and irreflexive. We will write $x\sim y$ to express 
that $(x,y)$ is in the edge set $E$, and think of $x$ and $y$ as \textit{neighbors}. If there are
competing graphs to be concerned with, we use a subscript to indicate with respect to which
graph the given relation is taken, e.g. $x\sim_\Gamma y$. 
Given an edge $e\in E$, we express the vertices that it connects by $e^{-}$ and $e^{+}$, 
in the sense that if $e = (x,y)$ then $x = e^{-}$ and $y = e^{+}$. Let $c$ be a \textit{conductance} 
\cite[Chapter 2]{PL}, that is, a positive symmetric weight on the edges of $\Gamma$, and 
for $x\in V$, let $ \pi(x)=\sum_{y:y\sim x}c(x,y)$ be the stationary distribution for the 
corresponding random walk.

\subsection{Hilbert spaces on vertices and on edges}
For a finite subset $U\subset V$ of vertices, we define 
the real Hilbert space of functions on vertices 
\begin{equation}\notag
\ell^2(U):= \{f:V\to\mathbb{R},\ \supp f\subset U \}
\end{equation}
with the inner product  
\begin{equation}
\langle f,g\rangle_{U}:=\sum_{x\in U}f(x)g(x).
\end{equation}  
Elements $f\in\ell^2(U)$ are tacitly assumed extended to all vertices in $V$ by declaring that
$f(x)=0$ for all $x\in V\setminus U$.
We shall need also the corresponding concept of Hilbert spaces on the edges.  So, for a 
finite symmetric set $F \subset E$ of edges (i.e., $e\in F \text{ if and only if } (-e)\in F$), we 
define the real Hilbert space of antisymmetric functions on those edges  
\begin{equation}\notag
\ell^2_{-}(F):=\big\{\phi: \to \mathbb{R}:\; \forall e\in F:\; \phi(e)=-\phi(-e)\big\},
\end{equation} 
supplied with the inner product
\begin{equation}
\displaystyle\langle\phi,\psi\rangle_{F^{-}}:=\frac{1}{2}\sum_{e\in F}\phi(e)\psi(e).
\end{equation}
As with the vertices, we let the elements $\phi\in\ell^2_{-}(F)$ extend to all the edges in $E$
by declaring that $\phi(e)=0$ if $e\in E\setminus F$.

\subsection{Weighted boundary and coboundary operators}

We now define the (weighted coboundary) operator $\mathbf{d}$:
\begin{equation}
 \mathbf{d}f(x,y)=\sqrt{c(x,y)}(f(x)-f(y)).
\end{equation} 
As defined, the coboundary operator $\mathbf{d}$ takes functions defined on the vertices 
and produces antisymmetric functions defined on the edges.
We also define the associated (weighted boundary) operator $\mathbf{d}^*$:
\begin{equation}
\displaystyle \mathbf{d}^*\phi(x) =\sum_{e\in E:\, e^-=x} \sqrt{c(e)}\phi(e).
\end{equation}
For a finite subset $U \subset V$, we denote by $E_U$ the collection of edges in the 
given graph $\Gamma$ that have at least one endpoint in $U$. 
We then immediately obtain the following statement.

\begin{proposition}
\label{adjoints}
Let $f\in \ell^2(U),\phi\in \ell^2_{-}(E_U)$. Then:
\begin{equation}
\langle f,\mathbf{d}^{*}\phi\rangle_{U}= \langle \mathbf{d}f,\phi\rangle_{E_U^-}.
\end{equation} 
Moreover, if $g$ is any function on $V$, then 
\[
\mathbf{d}^*\mathbf{d}\,g(x)=\sum_{y:y\sim x}c(x,y)(g(x)-g(y)).
\]
\end{proposition}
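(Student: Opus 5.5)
The proof is a direct computation: expand both inner products as finite sums and regroup them by pairs of opposite edges. Both sums are finite because $U$ is finite and $\Gamma$ is locally finite, so $E_U$ is finite. For the adjointness identity, start from the right-hand side and use the definition of $\langle\cdot,\cdot\rangle_{E_U^-}$:
\[
\langle \mathbf{d}f,\phi\rangle_{E_U^-}=\frac12\sum_{e\in E_U}\sqrt{c(e)}\bigl(f(e^-)-f(e^+)\bigr)\phi(e).
\]
We split this into the term involving $f(e^-)$ and the term involving $f(e^+)$. In the second term we substitute $e\mapsto -e$, which is a bijection of $E_U$ because $E_U$ is symmetric. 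Since $c(-e)=c(e)$, $\phi(-e)=-\phi(e)$ and $(-e)^+=e^-$, the second term becomes equal to the first. Hence
\[
\langle \mathbf{d}f,\phi\rangle_{E_U^-}=\sum_{e\in E_U}\sqrt{c(e)}f(e^-)\phi(e).
\]

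Next we regroup this sum by the tail vertex $x=e^-$. Because $f$ vanishes off $U$, only edges with $e^-\in U$ contribute. Every edge with $e^-=x\in U$ lies in $E_U$, so for each such $x$ the inner sum runs over all edges emanating from $x$. That inner sum is exactly $\mathbf{d}^*\phi(x)$, and therefore the expression equals $\sum_{x\in U}f(x)\mathbf{d}^*\phi(x)=\langle f,\mathbf{d}^*\phi\rangle_U$.

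The one point needing care is that $\mathbf{d}^*\phi(x)$ is defined as a sum over all of $E$, and this sum must agree with the restriction to $E_U$. That holds because $\phi$ is extended by zero outside $E_U$, and edges out of $x\in U$ belong to $E_U$ anyway.

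For the second formula, write the definition with $\phi=\mathbf{d}g$. An edge $e$ with $e^-=x$ is $e=(x,y)$ for a neighbour $y\sim x$, so
\[
\mathbf{d}^*\mathbf{d}g(x)=\sum_{y\sim x}\sqrt{c(x,y)}\,\sqrt{c(x,y)}\,\bigl(g(x)-g(y)\bigr)=\sum_{y\sim x}c(x,y)\bigl(g(x)-g(y)\bigr).
\]
Local finiteness makes this sum finite for arbitrary $g$, so no support assumption on $g$ is needed. There is no real obstacle in either part; the only delicate step is the bookkeeping of the factor $\tfrac12$ against the double counting of edges under $e\mapsto -e$.
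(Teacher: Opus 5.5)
Your proof is correct. It is essentially the paper's approach: the paper gives no proof and simply calls the proposition immediate, and your direct computation (the symmetrization $e\mapsto -e$ absorbing the factor $\tfrac12$, regrouping by the tail vertex using that every edge out of $x\in U$ lies in $E_U$, and the one-line evaluation of $\mathbf{d}^*\mathbf{d}g$) is exactly the routine verification it has in mind.
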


Next, we fix an arbitrary finite nonempty subset $U \subset V$. We define the 
\emph{discrete Sobolev space} 
\begin{equation}\notag
H_0^1(U)=\{f:V\to\mathbb{R}:\,\, \supp f \subset U \},
\end{equation}
where the \emph{support} has the standard discrete meaning:
\[
\supp f:=\{x\in V:\,\,f(x)\ne0\}.
\]
We endow $H^1_0(U)$ with the inner product structure given by
\begin{equation}
\langle f,g\rangle_{\nabla, U}:=\langle \mathbf{d}f,\mathbf{d}g\rangle_{E_U^-}
=\langle \mathbf{d}^*\mathbf{d}f,g\rangle_{U}.
\end{equation}
The associated Hilbert space norm
\begin{equation}
\| f\|^2_{\nabla, U}:=\langle \mathbf{d}f,\mathbf{d}f\rangle_{\nabla,U}
=\langle \mathbf{d}f,\mathbf{d}f\rangle_{E_U^-}
\end{equation}
then extends naturally to a seminorm on all functions $f:V\to\mathbb{R}$.
It is often referred to as the \emph{discrete Dirichlet energy} on $U$.

\subsection{The discrete Laplacian}
It is natural to introduce the operator 
$\mathbf{\Delta}_U:\ell^2(U)\to \ell^2(U)$:
\begin{equation}
\mathbf{\Delta}_Uf(x):=\mathds{1}_{U}\,
\mathbf{d}^*\mathbf{d}f(x),
\end{equation}
where $\mathds{1}_{U}$ denotes the characteristic function of the subset 
$U\subset V$. As a matter of definition, we say that a function $f:V\to\mathbb{R}$
is \emph{discrete harmonic on $U$} if $\mathbf{\Delta}_U f=0$ holds.
Such functions are critical points of the discrete Dirichlet energy on $U$ under 
additive perturbations using functions from $H^1_0(U)$.

According to the properties $(a)$ and $(b)$ of the following
proposition, the operator $\mathbf{\Delta}_U$ defines a self-adjoint operator
with nonnegative eigenvalues on the finite-dimensional space $\ell^2(U)$,
in which case it has a naturally defines square root 
$\displaystyle (\mathbf{\Delta}_U)^{\frac{1}{2}}$ 
with nonnegative eigenvalues. Hence part $(c)$ of the same proposition also 
makes sense.

\begin{proposition}\label{ModLapl}
The following properties hold for each finite subset $U\subset V$:
\begin{enumerate}[label=(\alph*)]
\item[$(a)$]  $\langle\mathbf{\Delta}_U f,g\rangle_{U}=\langle f,
\mathbf{\Delta}_Ug\rangle_{U}\;$  for any $f,g \in \ell^2(U)$.
\item[$(b)$] All eigenvalues of the operator $\mathbf{\Delta}_U$ are 
nonnegative.
\item[$(c)$] The operator $\displaystyle (\mathbf{\Delta}_U)^{\frac{1}{2}}$ 
defines an isometry $H_0^1(U)\to \ell^2(U)$.
\end{enumerate}
\end{proposition}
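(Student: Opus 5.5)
The whole argument will rest on Proposition \ref{adjoints}, applied with $U$ the fixed finite set and $\phi=\mathbf{d}g$ for $g\in\ell^2(U)$. First we check that $\mathbf{d}g\in\ell^2_{-}(E_U)$. Antisymmetry is immediate from the symmetry of $c$. Moreover, $\mathbf{d}g(x,y)\neq 0$ forces $g(x)\neq0$ or $g(y)\neq0$, so at least one endpoint lies in $U$; hence $\mathbf{d}g$ is supported in $E_U$. Proposition \ref{adjoints} then gives, for $f,g\in\ell^2(U)$,
\[
\langle f,\mathbf{d}^*\mathbf{d}g\rangle_U=\langle \mathbf{d}f,\mathbf{d}g\rangle_{E_U^-}.
\]
Since $f$ vanishes off $U$, the left side does not change if we insert $\mathds{1}_U$. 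It therefore equals $\langle f,\mathbf{\Delta}_U g\rangle_U$.

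\emph{Step (a).} By the identity above, $\langle f,\mathbf{\Delta}_Ug\rangle_U=\langle\mathbf{d}f,\mathbf{d}g\rangle_{E_U^-}$. The right side is symmetric in $f$ and $g$, because the edge inner product is a real symmetric bilinear form. Swapping the roles of $f$ and $g$ yields $\langle\mathbf{\Delta}_Uf,g\rangle_U$, which proves (a).

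\emph{Step (b).} Taking $g=f$ gives $\langle\mathbf{\Delta}_Uf,f\rangle_U=\tfrac12\sum_{e}|\mathbf{d}f(e)|^2\ge0$. Now let $\mathbf{\Delta}_Uf=\lambda f$ with $f\neq0$. Then $\lambda\|f\|_U^2\ge0$, so $\lambda\ge0$. By (a) and the finite-dimensional spectral theorem, $\mathbf{\Delta}_U$ is diagonalizable in an orthonormal basis of $\ell^2(U)$, so the square root $(\mathbf{\Delta}_U)^{1/2}$ is well defined and self-adjoint.

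\emph{Step (c).} As vector spaces $H^1_0(U)=\ell^2(U)$. We compute
\[
\|(\mathbf{\Delta}_U)^{1/2}f\|_U^2=\langle(\mathbf{\Delta}_U)^{1/2}f,(\mathbf{\Delta}_U)^{1/2}f\rangle_U=\langle\mathbf{\Delta}_Uf,f\rangle_U=\|f\|^2_{\nabla,U},
\]
where the middle equality uses self-adjointness of the square root and the last uses Step (b). This shows that $(\mathbf{\Delta}_U)^{1/2}$ preserves norms, hence it is an isometry.

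\emph{Positive definiteness.} The one point needing care is that $\|\cdot\|_{\nabla,U}$ is a genuine norm on $H^1_0(U)$, so that the isometry is injective and therefore bijective. Suppose $\mathbf{d}f=0$ on $E_U$. Then $f$ is constant along every path of edges meeting $U$. Since $\Gamma$ is connected and infinite (or at least $U\neq V$), some vertex of $U$ has a neighbour outside $U$, where $f=0$. Propagating along paths, every vertex of $U$ is reached by a path whose edges all meet $U$, and this forces $f\equiv0$.

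If instead $U=V$ is finite, constants are harmonic and the norm degenerates. In that case the statement should be read with $U\subsetneq V$, as is implicit in the Dirichlet setting. This caveat is the only nonroutine point; everything else is bookkeeping with Proposition \ref{adjoints}.
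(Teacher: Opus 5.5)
Your proof is correct and follows the paper's argument essentially line by line: (a), (b) and (c) all come from the identity $\langle \mathbf{d}^*\mathbf{d}f,g\rangle_U=\langle \mathbf{d}f,\mathbf{d}g\rangle_{E_U^-}$ of Proposition \ref{adjoints}, and in (c) you work with norms where the paper works with inner products, which is equivalent by polarization. Your extra check that $\|\cdot\|_{\nabla,U}$ is positive definite when $V\setminus U\neq\emptyset$ makes explicit a point the paper leaves implicit; it only concerns $H^1_0(U)$ being a genuine Hilbert space, since the identity $\|(\mathbf{\Delta}_U)^{1/2}f\|_U^2=\|f\|_{\nabla,U}^2$ holds in all cases.
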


\proof[Proof]
We note that
\begin{equation}
\langle\mathbf{\Delta}_Uf,g\rangle_{U}=\langle \mathbf{d}^*\mathbf{d}f,g\rangle_{U}=
\langle \mathbf{d}f,\mathbf{d}g\rangle_{E_U^-}= 
\langle f,
\mathbf{d}^*\mathbf{d}\,g\rangle_{U}=\langle f,\mathbf{\Delta}_Ug\rangle_{U},
\end{equation}
which shows that the assertion $(a)$ holds.
Next, let $\lambda$ be an eigenvalue of the self-adjoint operator 
$\mathbf{\Delta}_U$ and let $f\in \ell^2(U)$ be a corresponding eigenfunction. Since
\begin{equation}
\lambda\langle f,f\rangle_{U}=\langle\mathbf{\Delta}_Uf,f\rangle_{U}= 
\langle \mathbf{d}^*\mathbf{d}f,f\rangle_{U}
=\langle \mathbf{d}f,\mathbf{d}f\rangle_{E_U^-}=\langle f,f\rangle_{\nabla, U}\geq 0,
\end{equation}
it follows that $\lambda \geq 0$, and assertion $(b)$ follows as well.
Finally, given any two functions $f,g \in H^1_0(U)$ we obtain that
\begin{equation}
\big\langle(\mathbf{\Delta}_U)^{\frac{1}{2}}f,
(\mathbf{\Delta}_U)^{\frac{1}{2}}g\big\rangle_{U}=\langle\mathbf{\Delta}_Uf,g\rangle_{U}
=\langle \mathbf{d}^*\mathbf{d}\,f,g\rangle_{U}=\langle f,g\rangle_{H_0^1(U)}.
\end{equation}
It is now immediate that the operator $(\mathbf{\Delta}_U)^{\frac{1}{2}}$ constitutes an 
isometry $H_0^1(U) \to \ell^2(U)$, and property $(c)$ follows as well.
\endproof

\subsection{The discrete Green function}
For $y\in U$, we define the \emph{discrete Green function} $G_U(\cdot,y)$ as 
the solution to the following system: 
\begin{equation}
\begin{cases}
\mathbf{d}^*\mathbf{d}\, G_U(\cdot,y)=\pi(y)\,\delta_{y}\,\,\,\text{on}\,\,\, U,\\
G_U(x,y)=0, \qquad x \in V \setminus U,
\end{cases}
\end{equation}
where we recall that $\pi(x)=\sum_{y:y\sim x}c(y,x)$ stands for the stationary 
distribution for the random walk. Here, of course, $\delta_y(x)$ is understood in the
Kronecker sense, so that $\delta_y(x)=0$ of $x\neq y$ while 
$\delta_y(y)=1$.
We also declare that $G_U(x,y)=0$ holds for $y\in V \setminus U,\; x\in V$. 
We refer to \cite[Chapter 2]{PL} on the existence and uniqueness of such a function. 
The normalized Green operator $\Tilde{\mathbf{G}}_U:\ell^2(U) \to \ell^2(U)$ is
given by
\begin{equation}
\Tilde{\mathbf{G}}_Uf(x)=\sum_{y\in U}\tilde{G}_U(x,y)f(y),
\end{equation}
where
\begin{equation}
\tilde{G}_U(x,y) = \frac{G_U(x,y)}{\pi(y)}.
\end{equation}
Then, by definition, $\mathbf{\Delta}_U\Tilde{\mathbf{G}}_U=\mathbf{id}_{\ell^2(U)}$ is the 
identity operator on $\ell^2(U)$, so that $\Tilde{\mathbf{G}}_U$ constitutes a right inverse 
to $\mathbf{\Delta}_U$. Taking adjoints, we obtain
\begin{equation}
\Tilde{\mathbf{G}}_U^\ast\mathbf{\Delta}_U
=\Tilde{\mathbf{G}}_U^\ast\mathbf{\Delta}_U^\ast=
\mathbf{id}_{\ell^2(U)},
\label{eq:adjoint-1}
\end{equation}
since $\mathbf{\Delta}_U^\ast=\mathbf{\Delta}_U$ holds in accordance with Proposition 
\ref{ModLapl}. Then, multiplying by $\Tilde{\mathbf{G}}_U$ from the right, we find that
\[
\Tilde{\mathbf{G}}_U^\ast=\Tilde{\mathbf{G}}_U^\ast\mathbf{\Delta}_U\Tilde{\mathbf{G}}_U
=\Tilde{\mathbf{G}}_U^\ast\mathbf{\Delta}_U^\ast\Tilde{\mathbf{G}}_U=
\Tilde{\mathbf{G}}_U,
\]
making $\Tilde{\mathbf{G}}_U$ self-adjoint as an operator on $\ell^2(U)$. 
This property amounts to the following \emph{symmetry property} of the Green function 
$G_U$:
\begin{equation}
\pi(x)G_U(x,y)=\pi(y)G_U(y,x),\qquad x,y\in V.
\label{eq:G-symm-1}
\end{equation}
We also see from \eqref{eq:adjoint-1} that $\Tilde{\mathbf{G}}_U$ is also a left inverse
to $\mathbf{\Delta}_U$ on $\ell^2(U)$, so that 
$\Tilde{\mathbf{G}}_U=\mathbf{\Delta}_U^{-1}$ as operators on $\ell^2(U)$. 

\subsection{The discrete Poisson kernel}
In addition to the discrete Green function $G_U$, we shall need the associated discrete
Poisson kernel as well. This is perhaps less canonical than the concept of the Green 
function, but we may proceed as follows. 
For a nonempty subset $W \subset U$, we define the \emph{discrete Poisson kernel} 
$P_{U,W}(\cdot,y)$, for a given point $y\in W$, as the solution to the following system: 
\begin{equation}
\begin{cases}
\mathbf{d}^*\mathbf{d}\, P_{U,W}(\cdot,y)=0\,\,\,\,\,\text{on}\,\,\,\,\, U\setminus W,
\\
P_{U,W}(x,y)=\delta_y(x), \qquad x \in W.
\end{cases}
\end{equation}
Moreover, for $x \in V\setminus U$, we declare that $P_{U,W}(x,y)=0$. 
Associated with the discrete Poisson kernel, we have the \emph{Poisson extension
operator} $\mathbf{P}_{U,W}:\ell^2(W)\to\ell^2(U)\subset\ell^2(V)$ given by the formula
\[
\mathbf{P}_{U,W}f(x)=\sum_{y\in W}P_{U,W}(x,y)\,f(y),\qquad x\in V.
\]
While $\mathbf{P}_{U,W}f$ automatically vanishes on $V\setminus U$, we then have
that $\mathbf{P}_{U,W}f=f$ on $W$ while $\mathbf{P}_{U,W}f$ is harmonic on 
$U\setminus W$. As such, it is unique. 
Regarding existence and uniqueness issues, we again refer to \cite[Chapter 2]{PL}.

\subsection{Foliating the graph $\Gamma$}
We turn to the context we are going to work with to develop the Hadamard formula.
First we need the notion of a \emph{subgraph}.

\begin{definition}
\rm We say that $\gamma=(V(\gamma),E(\gamma))$ is a \emph{subgraph} of 
$\Gamma=(V,E)$  if  $V(\gamma)\subset V$ and $E(\gamma)\subset E$, while 
if $(x,y)\in E(\gamma)$, then necessarily $x,y\in V(\gamma)$. Moreover, we say
that $\gamma$ is an \emph{exact subgraph} of $\Gamma$ if it is a subgraph,
and whenever $x,y\in V(\gamma)$ and $(x,y)\in E$, then $(x,y)\in E(\gamma)$. 
\end{definition}

\begin{definition}
\rm
A sequence $\{\gamma_n\}_{n}$, indexed by $n\in\mathbb{Z}_{\ge0}$ and 
consisting of non-empty finite subgraphs of $\Gamma$, is called a 
\textit{discrete foliation}, if it has the following properties:
\begin{enumerate}[leftmargin=0.65cm,label=(\alph*)]
\item[$(a)$] For each $n$, $\gamma_n$ is an exact subgraph.
\item[$(b)$] For $m \neq n$, the associated sets of vertices $V(\gamma_{m})$ and 
$V(\gamma_{n})$ are disjoint.
\item[$(c)$] If $x\in V(\gamma_0)$ and $y\sim_\Gamma x$, then $y\in V(\gamma_0)\cup
V(\gamma_1)$. 
\item[$(d)$] For $n=1,2,3,\ldots$: If $x\in V(\gamma_n)$ and $y\sim_\Gamma x$, 
then $y\in V(\gamma_{n-1})\cup V(\gamma_n)\cup V(\gamma_{n+1})$.
\item[$(e)$] We have that $\displaystyle \bigcup_{n}V(\gamma_n)=V(\Gamma)$ .
\end{enumerate}
\end{definition}

In the context of the above definition, we refer to the individual subgraphs 
$\gamma_n$  as \emph{foliating layers}.

\begin{definition}
\rm
Given a discrete foliation $\{\gamma_n\}_n$ indexed by 
$n\in\mathbb{Z}_{\ge0}$,
we define the associated \emph{foliation growth cluster} $\Gamma_n$ of 
$\Gamma$ as the union of all preceding foliating layers  $\gamma_k$, with 
$k=0,\ldots,n$, of the given discrete foliation, in 
the following sense:  the vertices of $\Gamma_n$ are 
\begin{equation}\notag
V(\Gamma_n)=\bigcup_{0\leq k\leq n}V(\gamma_k),
\end{equation}
while the set of edges $E(\Gamma_n)$ consists of all the edges in $\Gamma$ 
between any two vertices in $V(\Gamma_n)$. 
\end{definition}

\subsection{Simplification of the notational conventions for discrete foliations}
As a matter of convenience, we prefer to write $\ell^2(\Gamma_n)$ and 
$H_0^1(\Gamma_n)$ in place of $\ell^2(V(\Gamma_n))$ and $H_0^1(V(\Gamma_n))$, 
respectively. Analogously, we write $\mathbf{\Delta}_n$ for the discrete Laplacian 
$\mathbf{\Delta}_{V(\Gamma_n)}$, $G_n$ for the discrete Green function 
$G_{V(\Gamma_n)}$ while $\Tilde{G}_n$ stands for the discrete normalized Green 
function $\Tilde{G}_{V(\Gamma_n)}$, and $P_n$ denotes the discrete Poisson kernel 
$P_{V(\Gamma_n),V(\gamma_n)}$.

\section{Orthonormal bases in $H^1_0$: construction of the foliation}\label{S:ONB}

\subsection{Aim of the section}
The main purpose of this note is the construction of a certain operator which produces the 
GFF (Gaussian Free Field) out of the harmonic extension of the  WNF (White Noise Field). 
The most straightforward way to do so is to provide an isometry that maps an orthonormal 
basis in $\ell^2$ onto one in $H^1_0$. In \cite{HN} this was carried out via the so-called 
\textit{Hadamard operator}, which, in turn, was extracted from  the
\textit{Hadamard variational formula} itself. In the discrete setting the corresponding 
formula is not readily available, nor can it be immediately obtained by just discretizing 
the corresponding continuous expression, so, in fact, we need to run our argument 
backwards. In other words, we assume that such a discrete isometric operator does exist 
and satisfies some natural properties relevant to our setting, and then we try to deduce 
what it should look like. Along the way, we also reverse-engineer the discrete version 
of the Hadamard variational formula.

\subsection{The GFF and WNF processes}
Before we proceed to the actual construction, let us recall what are the discrete 
WNF (White Noise Field) and $\mathrm{GFF}_0$ (Gaussian Free Field with vanishing 
boundary values) processes. 
Let $\Gamma_U$ be a finite connected subgraph of $\Gamma$, with the vertex set 
$U\subset V$.
\par
We denote the discrete WNF on the subgraph $\Gamma_U$ by 
$\Phi_U$. It is just a random linear combination
\begin{equation}
\Phi_U(x)=\sum_{j}\xi_j\,
\phi_j(x),
\end{equation}
where the set $\{\phi_j\}$ forms an orthonormal basis of $\ell^2(U)$, while $\{\xi_j\}$ are 
independent standard Gaussian random variables (i.e. distributed as $\mathcal{N}(0,1)$). 
Since there are only finitely many vertices in $U$, we see that $\Phi_U$ can be 
thought of as a random Gaussian vector $(\Phi_U(x_1),....,\Phi_U(x_{M}))$, where 
$U=\{x_1,...,x_{M}\}$.
We define the discrete $\mathrm{GFF}_0$ on the subgraph $\Gamma_U$, denoted 
$\Psi_U$, in a similar fashion:
\begin{equation}
\Psi_U(x)=\sum_{j}\xi_j\,
\psi_j(x),\qquad x\in U,
\end{equation}
where the finite sequence $\{\psi_j\}_j$ forms an orthonormal basis of $H_0^1(U)$, and 
$\{\xi_j\}_j$ are independent standard Gaussian random variables. As in the case of the 
discrete WNF, the discrete $\mathrm{GFF}_0$ can also be viewed as a random Gaussian
vector.

The following proposition shows that the distributions of $\Phi_U$ and $\Psi_U$ do not
depend on the choice of the orthonormal bases, since jointly Gaussian vectors are
characterized by first and second moments (including covariances). 
We recall the standard notation
\[
\cov(X,Y)=\mathbb{E}(XY)-\mathbb{E}(X)\mathbb{E}(Y)
\]
for the covariance of two stochastic variables, where $\mathbb{E}$ is the expectation 
operation.

\begin{proposition}
For any $x,y \in U$ we have:
\begin{equation}
\mathbb{E}(\Phi_U(x))=\mathbb{E}(\Psi_U(x))=0,
\leqno(i)
\end{equation}
\begin{equation}
\cov(\Phi_U(x),\Phi_U(y))=\delta_y(x),
\leqno(ii)\end{equation}
and
\begin{equation}
\cov(\Psi_U(x),\Psi_U(y))=\frac{1}{\pi(y)}G_U(x,y).
\leqno(iii)
\end{equation}
\end{proposition}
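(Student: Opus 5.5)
Part (i) follows from linearity of expectation: each $\xi_j$ has mean zero and the sums are finite, so $\mathbb{E}\,\Phi_U(x)=\sum_j \phi_j(x)\,\mathbb{E}\xi_j=0$, and likewise for $\Psi_U$. For the covariances, independence and unit variance give $\mathbb{E}(\xi_j\xi_k)=\delta_j(k)$. Hence
\[
\cov(\Phi_U(x),\Phi_U(y))=\sum_j\phi_j(x)\phi_j(y),\qquad
\cov(\Psi_U(x),\Psi_U(y))=\sum_j\psi_j(x)\psi_j(y).
\]
Both covariances thus reduce to reproducing-kernel identities for the respective Hilbert spaces.

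For (ii), I would expand $\delta_y\in\ell^2(U)$ in the orthonormal basis $\{\phi_j\}$: $\delta_y=\sum_j\langle\delta_y,\phi_j\rangle_U\,\phi_j=\sum_j\phi_j(y)\phi_j$. Evaluating at $x$ gives $\sum_j\phi_j(x)\phi_j(y)=\delta_y(x)$.

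For (iii), the key step is to identify the reproducing kernel of $H^1_0(U)$ with respect to $\langle\cdot,\cdot\rangle_{\nabla,U}$. Set $k_y:=\tilde G_U(\cdot,y)=G_U(\cdot,y)/\pi(y)$. This function vanishes off $U$, so it lies in $H^1_0(U)$, and by the definition of the Green function $\mathbf{\Delta}_U k_y=\delta_y$ on $U$. By the identity $\langle f,g\rangle_{\nabla,U}=\langle\mathbf{d}^*\mathbf{d}f,g\rangle_U$ (Proposition \ref{adjoints}, together with the fact that $g$ is supported in $U$, so only $\mathds{1}_U\mathbf{d}^*\mathbf{d}f$ matters), every $f\in H^1_0(U)$ satisfies
\[
\langle f,k_y\rangle_{\nabla,U}=\langle f,\mathbf{\Delta}_Uk_y\rangle_U=f(y),
\]
where symmetry from Proposition \ref{ModLapl}(a) is used to move $\mathbf{\Delta}_U$ onto $k_y$. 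Expanding $k_y$ in the orthonormal basis $\{\psi_j\}$ of $H^1_0(U)$ then gives
\[
k_y=\sum_j\langle k_y,\psi_j\rangle_{\nabla,U}\psi_j=\sum_j\psi_j(y)\psi_j .
\]
Evaluating at $x$ yields $\sum_j\psi_j(x)\psi_j(y)=G_U(x,y)/\pi(y)$, which is the stated formula.

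The main obstacle is this reproducing property: one has to check that the energy inner product is indeed given by pairing with $\mathbf{\Delta}_U$ on $U$, so that edges leaving $U$ are accounted for correctly. Since both functions vanish outside $U$, Proposition \ref{adjoints} handles this directly. Finally, the symmetry \eqref{eq:G-symm-1} confirms that the resulting expression is symmetric in $x$ and $y$, as a covariance must be.
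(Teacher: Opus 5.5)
Your proposal is correct and follows the same route as the paper: you reduce both covariances to $\sum_j\phi_j(x)\phi_j(y)$ and $\sum_j\psi_j(x)\psi_j(y)$, then identify these as the reproducing kernels of $\ell^2(U)$ and $H^1_0(U)$. The only difference is that you explicitly check that $G_U(\cdot,y)/\pi(y)$ is the reproducing kernel of $H^1_0(U)$ for the energy inner product; the paper leaves this step implicit under ``the orthonormal basis property.''
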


\begin{proof}
Since the Gaussian variables $\xi_j$ are from $\mathcal{N}(0,1)$, the first moments
vanish, i.e. $(i)$ holds. As to second moments, we find using the independence of 
the Gaussian variables that 
\[
\mathbb{E}\,\Phi_U(x)\Phi_U(y)=\sum_j \phi_j(x)\phi_j(y)=\delta_y(x),
\] 
where the last equality identity uses the orthonormal basis property of $\{\phi_j\}_j$ 
in $\ell^2(U)$. In a similar fashion, we find that
 \[
\mathbb{E}\,\Psi_U(x)\Psi_U(y)=\sum_j \psi_j(x)\psi_j(y)=\frac{1}{\pi(y)}G_U(x,y),
\] 
where the last equality uses the orthonormal basis property of $\{\psi_j\}_j$ in
the discrete Sobolev space $H^1_0(U)$.
\end{proof}

\subsection{Desirable properties of the sought-after operator}

Given $\Gamma_U$ as above, we consider a connected subgraph $\Gamma_W$ of
$\Gamma_U$
with vertex set $W$. For simplicity we assume that $\Gamma_U$ 'properly envelops' 
$\Gamma_W$: 
$W$ and $\Gamma\setminus U$ are disconnected in $\Gamma$, but for every 
$x\in U\setminus W$ there exists an edge $(x,y)$ in $\Gamma_U$ for some 
$y\in W$ (so that $U\setminus W$ 
forms a 'layer' around $W$). The orthonormal basis in $\ell^2(U)$ is taken to be just the 
collection of point masses in vertices of $U$, $\{\phi_n\} := \{\delta_x\}_{x\in U}$. 
We aim to define an operator $\mathbf{Q}_{U,W} = \mathbf{Q}_W$ acting on $\ell^2(U)$ 
such that $\mathbf{Q}_W\Phi_U = \Psi_W$ holds in the sense of having equal 
probability distributions. 
Now, there happen to be a whole lot of such possible operators, so we wish to narrow our 
search by requiring it to satisfy a certain set of additional conditions:
\begin{itemize}[leftmargin=0.7cm]
\item[$(a)$] \textbf{Isometry.} The operator $\mathbf{Q}_W$ should map the basis 
$\{\delta_x\}_{x\in W}$ to some orthonormal basis $\{\psi_x\}_{x\in W}$ of $H^1_0(W)$.
\item[$(b)$] \textbf{Boundary values.} Since $\Psi_W$ vanishes outside of $W$, so that 
$\supp \mathbf{Q}_Wf \subset W$, it is convenient to assume that $\mathbf{Q}_W$ 
actually does not take values of $f$ outside $W$ into account, so that
\begin{equation}  
\label{e:qp1}
\mathbf{Q}_Wf\equiv 0,\quad \text{if}\,\,\,\supp f\subset U\setminus W.
\end{equation}
\item[$(c)$] \textbf{Markov property.} Let $W'\subset W$ be 'enveloped' by $W$ 
in the above sense. The conditional expectation
\begin{equation}\notag
\varphi_{W,W'} := \mathbb{E}(\Psi_W\vert\Psi_W\mathbbm{1}_{W\setminus W'}),
\end{equation}
which is random because it is conditioned on random values in $W\setminus W'$,
is almost surely discrete harmonic on $W'$. Its boundary values are those of 
$\Psi_W$ on $W\setminus W'$, and we also have that 
$
\Psi_W-\varphi_{W,W'} = \Psi_{W'}$ holds in distribution. This suggests that we should
assume $\mathbf{Q}_{W}f - \mathbf{Q}_{W'}f$ to be discrete harmonic on $W'$ with 
boundary values $\mathbf{Q}_{W}f$. Moreover, by the previous condition, if $f$ is 
supported on $W\setminus W'$, then 
$\mathbf{Q}_{W}f - \mathbf{Q}_{W'}f = \mathbf{Q}_Wf$, 
in particular
\begin{equation}\label{e:qp2}
\mathbf{Q}_W\delta_{x} \textup{ is discrete harmonic on}\; W',\quad x\in W\setminus W'.
\end{equation}
For point masses in $W'$ we actually ask even more, namely stability, 
\begin{equation}
\label{e:qp3}
\mathbf{Q}_W\delta_{x} = \mathbf{Q}_{W'}\delta_x, \quad x\in W',
\end{equation}
so that the difference vanishes: $\mathbf{Q}_{W}\delta_x - \mathbf{Q}_{W'}\delta_x=0$  
(and in particular it is discrete harmonic). 
\end{itemize}
It turns out that there is natural way to construct such an operator -- by growing it from 
a given point. Although the detailed construction is presented in the next section, 
we supply a general outline below.
We assume that the operator $\mathbf{Q}_{W'}$ is given for some subset of vertices 
$W'\subset W$, and that it supplies an isometry 
$\ell^2(W')\rightarrow H^1_0(W')$, i.e. $\mathbf{Q}_{W'}\delta_x = \psi_x,\; x\in W'$, 
for some choice of the orthonormal basis $\{\psi_x\}_{x\in W'}$ of $H^1_0(W')$. 
As usual, we extend $\psi_x(y):= 0$ for $x\in W'$ and $y\in W\setminus W'$.  

Next, we consider the collection of Kronecker deltas 
$\{\delta_{\xi}\}_{\xi\in W\setminus W'}$ which together with the basis of $\ell^2(W')$ 
supplies an orthonormal basis for $\ell^2(W)$. 
We aim to define the action of $\mathbf{Q}_W$ on these functions by putting 
$\mathbf{Q}_W\delta_\xi=\psi_\xi$ for $\xi\in W\setminus W'$, where the functions
$\psi_{\xi}$ for $\xi\in W\setminus W'$ are unit vectors that are mutually orthogonal 
in $H^1_0(W)\ominus H^1_0(W')$, where $H^1_0(W')$ is -- as usual -- thought 
of as a subspace of $H^1_0(W)$ by extending the functions to vanish in 
$W\setminus W'$. 
The thus extended basis $\{\psi_x\}_{x\in W}$ will naturally form an orthonormal basis 
of $H^1_0(W)$, which in turn allows us to go to the next step in the iteration procedure. 
The Poisson kernel for $W$,
\begin{equation}
\tilde{\psi_{\xi}}(y) := 
P_{W, W\setminus W'}(y,\xi)
\label{eq:psitilde-1}
\end{equation}
where $\xi\in W\setminus W'$,  are linearly independent, and moreover,
orthogonal to the subspace $H^1_0(W')=\spn\{\psi_{x}:\,x\in W'\}$. Indeed, we calculate that
\begin{equation} \notag
\begin{split}
& \langle\tilde{\psi}_{\xi},\psi_x\rangle_{\nabla, W} = 
\langle \mathbf{d}^*\mathbf{d}\,\tilde{\psi}_{\xi},\psi_{x}\rangle_{W} = 0,\quad 
\xi\in W\setminus W',\; x\in W',
\end{split}
\end{equation}
since, as a matter of definition, $\mathbf{d}^*\mathbf{d}\,\tilde{\psi}_{\xi}=0$ holds on 
$\supp \psi_x\subset W'$. While the functions $\tilde{\psi}_{\xi},\; \xi\in W\setminus W'$ 
are not, generally speaking, orthogonal to each other, one can easily make them orthogonal, 
via an orthogonalization procedure such as Gram-Schmidt. This means that
there exists a weight function $\rho: W\setminus W' \times W \setminus W' \to \mathbb{R}$ 
such that the functions 
\begin{equation}\notag
\psi_{\xi}(y) := \sum_{\eta}\tilde{\psi}_{\eta}(y)\rho(\eta,\xi), \quad \xi\in 
W\setminus W',\; y\in W,
\end{equation}
form an orthonormal system in $H^1_0(W)$, and, combined with the system 
$\{\psi_{x}\}_{x\in W'}$, form an orthonormal basis for the space $H^1_0(W)$. 
Alternatively, if we make the minimal iteration step of adding a single point to 
$W\setminus W'$ at a time, no additional orthogonality condition is required, and we 
would just normalize $\tilde\psi_\xi$:
$\psi_\xi=\|\tilde\psi_\xi\|_{\nabla,W}^{-1}\tilde\psi_\xi$ for $\xi\in W\setminus W'$.


\section{Discrete Hadamard's formula and the related operator}\label{S:DHF}

\subsection{Variation of the discrete Green function}
We recall the notation of the discrete Green kernel and Poisson kernel
\[
G_n=G_{V(\Gamma_n)},\quad P_n=P_{V(\Gamma_n),V(\gamma_n)},
\]
in the context of a discrete foliation $\{\gamma_n\}_n$ of the given graph $\Gamma$,
where $\{\Gamma_n\}_n$ denotes the associated growth clusters.

\begin{lemma}
\label{Var}
{{\rm(Variation of the Green function)}}
For $x,y \in V$ we have that
\begin{equation} 
G_{n}(x,y)-G_{n-1}(x,y)=\sum_{\xi\in V(\gamma_{n})}P_{n}(x,\xi) G_{n}(\xi,y).
\end{equation}
\end{lemma}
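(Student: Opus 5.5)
Fix $y\in V$ and consider the difference $h(x):=G_n(x,y)-G_{n-1}(x,y)$ as a function of $x$. The plan is to show that $h$ solves the Dirichlet problem that defines the Poisson extension from the last layer $V(\gamma_n)$ into $V(\Gamma_n)$. Its boundary data are $G_n(\cdot,y)$ on $V(\gamma_n)$, so $h=\mathbf{P}_n\bigl(G_n(\cdot,y)|_{V(\gamma_n)}\bigr)$. Written out with the kernel $P_n(x,\xi)$, this is exactly the asserted identity.

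We split according to where $y$ lies.

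First suppose $y\in V(\Gamma_{n-1})$. For $x\in V(\Gamma_{n-1})=V(\Gamma_n)\setminus V(\gamma_n)$ we have
\[
\mathbf{d}^*\mathbf{d}\,G_n(\cdot,y)(x)=\pi(y)\delta_y(x)=\mathbf{d}^*\mathbf{d}\,G_{n-1}(\cdot,y)(x),
\]
so $h$ is discrete harmonic on $V(\Gamma_{n-1})$. The operator $\mathbf{d}^*\mathbf{d}$ evaluated at $x$ only involves neighbours of $x$, and those may lie in $V(\gamma_n)$; this is harmless, because $G_{n-1}(\cdot,y)$ vanishes there by definition and the defining equation for $G_{n-1}$ already accounts for those zero values. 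On $V(\gamma_n)$ we have $G_{n-1}(\cdot,y)=0$, so $h=G_n(\cdot,y)$ there. Outside $V(\Gamma_n)$ both Green functions vanish, so $h=0$. By the uniqueness of the Poisson extension recalled in the paper (via \cite[Chapter 2]{PL}), $h(x)=\sum_{\xi\in V(\gamma_n)}P_n(x,\xi)G_n(\xi,y)$.

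Next suppose $y\in V(\gamma_n)$. Then $G_{n-1}(\cdot,y)\equiv0$ by the convention $G_U(x,y)=0$ for $y\notin U$, so $h=G_n(\cdot,y)$. This function is harmonic on $V(\Gamma_n)\setminus V(\gamma_n)$, since $\delta_y$ vanishes there, and it equals its own values on $V(\gamma_n)$. The same uniqueness argument again gives the formula.

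Finally suppose $y\notin V(\Gamma_n)$. Both sides vanish: the left side because $G_n(x,y)=G_{n-1}(x,y)=0$ by convention, and the right side because each $G_n(\xi,y)=0$.

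The only point needing care is uniqueness of the harmonic extension when $V(\Gamma_{n-1})$ may be disconnected from $V(\gamma_n)$ inside $\Gamma_n$. It suffices to note that a function vanishing on $V\setminus V(\Gamma_{n-1})$ and harmonic on $V(\Gamma_{n-1})$ lies in $\ell^2(V(\Gamma_{n-1}))$ and is annihilated by $\mathbf{\Delta}_{n-1}$. Since $\mathbf{\Delta}_{n-1}$ is invertible (with inverse $\tilde{\mathbf{G}}_{n-1}$, as shown earlier), such a function is zero.
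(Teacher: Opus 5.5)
Your proof is correct and follows the same route as the paper. Both show that $G_n(\cdot,y)-G_{n-1}(\cdot,y)$ is harmonic on $V(\Gamma_{n-1})$, equals $G_n(\cdot,y)$ on $V(\gamma_n)$ and vanishes off $V(\Gamma_n)$, and then invoke uniqueness of the Poisson extension. Your case split in $y$ and your uniqueness argument via invertibility of $\mathbf{\Delta}_{n-1}$ spell out what the paper compresses into ``the standard discrete Poisson representation formula.'' The disconnectedness you guard against cannot actually occur under the foliation axioms, but your argument does not depend on that.
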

\proof[Proof]
 If one of $x$ and $y$ is off $V(\Gamma_{n})$, or both, then both sides of the identity vanish. 
 If not, then both are in $V(\Gamma_n)$, and we think of $y$ as fixed. We observe that
 \[
 \mathbf{d}^*\mathbf{d}\big(G_{n}(\cdot,y)-G_{n-1}(\cdot,y)\big)=0
 \quad\text{on}\,\,\,V(\Gamma_{n-1})=V(\Gamma_n)\setminus V(\gamma_n),
 \]
 while
 \[
 G_{n}(x,y)-G_{n-1}(x,y)=G_{n}(x,y),\qquad x\in V(\gamma_n),
 \] 
 so the claimed identity now follows from the standard discrete Poisson 
 representation formula.
\endproof

\subsection{Hadamard variation formula in the discrete setting}

We now proceed to derive the discrete version of the Hadamard's variational formula. 
To prepare the ground for that, we define the boundary normalized Green operator 
$\Tilde{\mathbf{G}}^{\langle n\rangle}:\ell^2(V(\gamma_n))\to \ell^2(V(\gamma_n))$ by  
\begin{equation}
\label{eq:Green-bdry-1}
\Tilde{\mathbf{G}}^{\langle n\rangle}f(x)=
\sum_{y\in V(\gamma_n)}\Tilde G_n(x,y)f(y)=
\sum_{y\in V(\gamma_n)}G_n(x,y)\frac{f(y)}{\pi(y)},\qquad x\in V(\gamma_n).
\end{equation}
Since $\Tilde{G}_n(x,y)$ is symmetric and defines a positive operator 
$\Tilde{\mathbf{G}}_n$ on $\ell^2(V(\Gamma_n))$, the boundary operator 
$\Tilde{\mathbf{G}}^{\langle n\rangle}$ is definitely symmetric and semi-positive. 
Later on, we will need to know that it is actually positive, and hence invertible.

\begin{proposition}
The boundary Green operator 
$\Tilde{\mathbf{G}}^{\langle n\rangle}:\ell^2(V(\gamma_n))\to \ell^2(V(\gamma_n))$
given by \eqref{eq:Green-bdry-1} is symmetric with positive eigenvalues.
In particular, $\Tilde{\mathbf{G}}^{\langle n\rangle}$ is invertible. 
\label{prop:Green-inv-1}
\end{proposition}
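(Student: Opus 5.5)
The plan is to identify $\Tilde{\mathbf{G}}^{\langle n\rangle}$ as a compression of $\Tilde{\mathbf{G}}_n$, and then get positivity from the invertibility of $\Tilde{\mathbf{G}}_n=\mathbf{\Delta}_n^{-1}$. Let $J:\ell^2(V(\gamma_n))\to\ell^2(\Gamma_n)$ be extension by zero; it is an isometric embedding since $V(\gamma_n)\subset V(\Gamma_n)$. Its adjoint $J^*$ is restriction to $V(\gamma_n)$. By \eqref{eq:Green-bdry-1} we have
\[
\Tilde{\mathbf{G}}^{\langle n\rangle}=J^*\,\Tilde{\mathbf{G}}_n\,J .
\]
Symmetry is then immediate, because $\Tilde{\mathbf{G}}_n$ is self-adjoint on $\ell^2(\Gamma_n)$, as shown in the subsection on the discrete Green function.

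For positivity of the eigenvalues, take $f\in\ell^2(V(\gamma_n))$ with $f\neq0$ and set $g=\Tilde{\mathbf{G}}_nJf\in\ell^2(\Gamma_n)$. Since $\mathbf{\Delta}_n\Tilde{\mathbf{G}}_n=\mathbf{id}$, we get $\mathbf{\Delta}_n g=Jf$. Then
\[
\langle \Tilde{\mathbf{G}}^{\langle n\rangle}f,f\rangle_{V(\gamma_n)}
=\langle \Tilde{\mathbf{G}}_nJf,Jf\rangle_{V(\Gamma_n)}
=\langle g,\mathbf{\Delta}_n g\rangle_{V(\Gamma_n)}
=\|g\|^2_{\nabla,V(\Gamma_n)},
\]
using Proposition \ref{ModLapl}. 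It remains to show that this is strictly positive. We have $g\neq0$, since otherwise $Jf=\mathbf{\Delta}_n g=0$. Also $g\in H^1_0(\Gamma_n)$.

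The key step, and the only nonroutine one, is that $\|\cdot\|_{\nabla,V(\Gamma_n)}$ is a genuine norm on $H^1_0(\Gamma_n)$, not merely a seminorm. Suppose $\|g\|_{\nabla}=0$. Then $\mathbf{d}g(e)=0$ on every edge of $E_{V(\Gamma_n)}$, so $g$ is constant along every edge with an endpoint in $V(\Gamma_n)$. Since $\Gamma$ is connected and infinite (or, more generally, $V(\Gamma_n)\neq V$), some vertex of $V(\Gamma_n)$ has a neighbor outside $V(\Gamma_n)$, where $g=0$. If $V(\Gamma_n)$ happened to be all of $V$, invertibility would fail, so the statement implicitly presumes that the outside is nonempty. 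Propagating along paths in $\Gamma$ from any $x\in V(\Gamma_n)$ to the outside, with every traversed edge touching $V(\Gamma_n)$ until the path first exits, we conclude $g(x)=0$.

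Alternatively, one can bypass this argument by noting that the paper already uses $\Tilde{\mathbf{G}}_n=\mathbf{\Delta}_n^{-1}$, which presupposes that $\mathbf{\Delta}_n$ is invertible. In that case all eigenvalues of $\mathbf{\Delta}_n$ are positive, hence so are those of $\Tilde{\mathbf{G}}_n$. Then $\langle \Tilde{\mathbf{G}}_nJf,Jf\rangle\geq\lambda_{\min}\|Jf\|^2=\lambda_{\min}\|f\|^2>0$ directly. Either way, the quadratic form is positive definite, so all eigenvalues of the symmetric operator $\Tilde{\mathbf{G}}^{\langle n\rangle}$ are positive and it is invertible.
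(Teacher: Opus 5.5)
Your proof is correct, but it takes a different route from the paper.

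\textbf{The paper's route.} The paper takes semi-positivity of $\Tilde{\mathbf{G}}^{\langle n\rangle}$ for granted and only rules out a zero eigenvalue. Suppose $\Tilde{\mathbf{G}}^{\langle n\rangle}f=0$, with $f$ extended by zero. Then $\Tilde{\mathbf{G}}_n f$ vanishes on $V(\gamma_n)$ and is harmonic on $V(\Gamma_{n-1})$. By uniqueness of the Poisson extension $\mathbf{P}_n$ it vanishes on all of $V(\Gamma_n)$, and then $f=\mathbf{\Delta}_n\Tilde{\mathbf{G}}_n f=0$.

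\textbf{Your route.} You write $\Tilde{\mathbf{G}}^{\langle n\rangle}=J^*\Tilde{\mathbf{G}}_nJ$ and prove positive definiteness of the quadratic form directly. You do this in one of two ways:
\begin{enumerate}
\item through the energy identity $\langle\Tilde{\mathbf{G}}^{\langle n\rangle}f,f\rangle=\|\Tilde{\mathbf{G}}_nJf\|^2_{\nabla,V(\Gamma_n)}$, together with nondegeneracy of the Dirichlet form on $H^1_0$ (your path argument for this is sound);
\item through the observation that a compression of the positive definite operator $\Tilde{\mathbf{G}}_n=\mathbf{\Delta}_n^{-1}$ is positive definite.
\end{enumerate}

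\textbf{What each buys.} Your argument needs no Poisson kernel. It gets semi-positivity and nondegeneracy in one stroke, and it works for any subset of $V(\Gamma_n)$. It also gives a quantitative bound: every eigenvalue of $\Tilde{\mathbf{G}}^{\langle n\rangle}$ is at least $1/\lambda_{\max}(\mathbf{\Delta}_n)$. Your first version also proves the nondegeneracy of the energy, which the paper imports from the cited existence theory for $G_U$. The paper's argument is only qualitative, but it stays inside the harmonic-extension framework that drives Lemma~\ref{Var} and Theorem~\ref{Hadamard}.

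The paper's own premise, that $\Tilde{\mathbf{G}}_n$ is a positive operator, already yields the conclusion by your compression step. Its Poisson-extension argument is therefore needed only if one starts from semi-positivity alone.

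Your caveat about $V(\Gamma_n)=V$ is moot. The foliation consists of infinitely many nonempty, pairwise disjoint layers, so $V$ is infinite and $V(\Gamma_n)\neq V$ automatically.
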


\begin{proof}
Given that the operator is symmetric and semi-positive, we just need to exclude $0$
as an eigenvalue. In other words, we show that for $f\in\ell^2(V(\gamma_n))$, we
have the implication
\[
\Tilde{\mathbf{G}}^{\langle n\rangle}f=0 \,\,\,\Longrightarrow\,\,\,f=0.
\]
We extend $f\in \ell^2(V\gamma_n))$ to the growth cluster $V(\Gamma_n)$ by 
declaring that $f=0$ on $V(\Gamma_n)\setminus V(\gamma_n)=V(\Gamma_{n-1})$, 
and consider the
function $\Tilde{\mathbf{G}}_n f$, which is in $\ell^2(V(\Gamma_n))$.
Since as a matter of definition, 
$\Tilde{\mathbf{G}}_n f=\Tilde{\mathbf{G}}^{\langle n\rangle}f$ on $V(\gamma_n)$,
we know that $\Tilde{\mathbf{G}}_n f=0$ on $V(\gamma_n)$. But we also know that
$\Tilde{\mathbf{G}}_n f$ is harmonic in $V(\Gamma_n)\setminus\supp f\supset 
V(\Gamma_{n-1})$. By the uniqueness of the Poisson extension then, we obtain that
\[
\Tilde{\mathbf{G}}_n f=\mathbf{P}_{n}(\mathbf{G}^{\langle n\rangle} f)=0\quad\text{on}
\,\,\, V(\Gamma_n),
\]
and hence
\[
f=\mathbf{\Delta}_{V(\Gamma_n)}\Tilde{\mathbf{G}}_n f=0.
\]
The proof is complete.
\end{proof}

Now, according to the spectral theorem for real symmetric matrices there is an 
orthonormal basis of real eigenvectors. If $\lambda\in\mathbb{R}_{>0}$, and 
$f_\lambda\in \ell^2(V(\gamma_n))$ is a nontrivial real-valued eigenvector, so 
that $\Tilde{\mathbf{G}}^{(n)}f_\lambda=\lambda f_\lambda$ holds, we can 
simply declare the canonical square root operator 
$\Tilde{\mathbf{R}}_n:\ell^2(V(\gamma_n))\to \ell^2(V(\gamma_n))$ by the
formula
\[
\Tilde{\mathbf{R}}_n f_\lambda=\sqrt{\lambda}f_\lambda,
\] 
where we choose the nonnegative square root of $\lambda$. 
By linear extension, this gives a well-defined operator, with the property that
$(\Tilde{\mathbf{R}}_n)^2=\Tilde{\mathbf{G}}^{(n)}$. It is also necessarily
real and symmetric: If we put 
\[
\Tilde{R}_n(x,y) := \tilde{\mathbf{R}}_n\delta_x(y),\qquad x,y\in V(\gamma_n),
\]
then $\Tilde{R}_n(x,y)\in\mathbb{R}$, and 
\[
{\Tilde{R}_n(x,y)}=\Tilde{R}_n(y,x),\qquad x,y\in V(\gamma_n).
\]
The property of being the operator square root now can expressed in the following 
form,  for $x,y\in V(\gamma_n)$:
\begin{equation}
\label{eq:R-eq-1}
\frac{G_n(x,y)}{\pi(y)}=\Tilde{G}_n(x,y)=\Tilde{\mathbf{G}}^{\langle n\rangle}
(\delta_x)(y)
=(\Tilde{\mathbf{R}}_n)^2(\delta_x)(y)=\sum_{\xi \in V(\gamma_n)}
\Tilde{R}_n(x,\xi) \Tilde{R}_n(\xi,y).
\end{equation}
We are now ready to formulate the general discrete Hadamard variational formula.

\begin{theorem}{{\rm (Discrete Hadamard variational formula)}}
\label{Hadamard}
For $x,y \in V$, we have
\begin{equation}
\Tilde{G}_n(x,y) = \frac{G_{n}(x,y)}{\pi(y)}=
\sum_{\xi\in V(\Gamma_{n})}K_{t(\xi)}(x,\xi) K_{t(\xi)}(y,\xi),
\end{equation}
where $t(\xi)$ denotes the unique discrete time step $m\in\mathbb{Z}_{\geq 0}$ 
such that $\xi\in V(\gamma_m)$, and the indicated kernel is given by
\begin{equation}
K_{m}(y,\xi):=\sum_{\eta\in V(\gamma_m)}P_m(y,\eta)\Tilde{R}_m(\eta,\xi),\qquad    
\xi \in V(\gamma_m).
\end{equation}
\end{theorem}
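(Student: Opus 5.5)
The plan is to telescope Lemma \ref{Var} in $n$ and then factor each increment using the square root $\Tilde{\mathbf{R}}_m$. Set $G_{-1}\equiv 0$, which is consistent with the empty cluster. For $m=0$ the Poisson kernel $P_0=P_{V(\Gamma_0),V(\gamma_0)}$ is the identity on $V(\gamma_0)=V(\Gamma_0)$, so the lemma's identity $G_0-G_{-1}=\sum_\xi P_0(\cdot,\xi)G_0(\xi,\cdot)$ holds trivially. Summing Lemma \ref{Var} over $m=0,\dots,n$ then gives
\begin{equation}\notag
G_n(x,y)=\sum_{m=0}^{n}\sum_{\eta\in V(\gamma_m)}P_m(x,\eta)\,G_m(\eta,y),\qquad x,y\in V.
\end{equation}
Since $V(\Gamma_n)$ is the disjoint union of the $V(\gamma_m)$, $m\le n$, the right-hand side of the theorem is exactly $\sum_{m=0}^n\sum_{\xi\in V(\gamma_m)}K_m(x,\xi)K_m(y,\xi)$. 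It therefore suffices to prove, for each $m$, the layer identity
\begin{equation}\notag
\sum_{\eta\in V(\gamma_m)}P_m(x,\eta)\frac{G_m(\eta,y)}{\pi(y)}=\sum_{\xi\in V(\gamma_m)}K_m(x,\xi)K_m(y,\xi).
\end{equation}

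To prove the layer identity, I first express $G_m(\eta,y)$ for $\eta\in V(\gamma_m)$ and arbitrary $y\in V$ through the Poisson kernel. By the symmetry \eqref{eq:G-symm-1}, $G_m(\eta,y)/\pi(y)=G_m(y,\eta)/\pi(\eta)=\Tilde G_m(y,\eta)$. As a function of $y$, this is harmonic on $V(\Gamma_{m-1})=V(\Gamma_m)\setminus V(\gamma_m)$, because there $\mathbf{d}^*\mathbf{d}\,G_m(\cdot,\eta)=\pi(\eta)\delta_\eta$ vanishes. It also vanishes off $V(\Gamma_m)$. By uniqueness of the Poisson extension,
\begin{equation}\notag
\Tilde G_m(y,\eta)=\sum_{\zeta\in V(\gamma_m)}P_m(y,\zeta)\,\Tilde G_m(\zeta,\eta).
\end{equation}
The same argument applied to $\Tilde G_m(\cdot,\eta)$ for fixed $\eta$ gives the analogous extension in $x$.

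Next I substitute \eqref{eq:R-eq-1}, namely $\Tilde G_m(\zeta,\eta)=\sum_{\xi}\Tilde R_m(\zeta,\xi)\Tilde R_m(\xi,\eta)$ for $\zeta,\eta\in V(\gamma_m)$. Note that $\Tilde G_m(\zeta,\eta)=\Tilde G_m(\eta,\zeta)$ there, since by \eqref{eq:G-symm-1} the normalized kernel is symmetric. The left side of the layer identity becomes
\begin{equation}\notag
\sum_{\eta,\zeta,\xi\in V(\gamma_m)}P_m(x,\eta)P_m(y,\zeta)\,\Tilde R_m(\zeta,\xi)\Tilde R_m(\xi,\eta).
\end{equation}
Using the symmetry $\Tilde R_m(\xi,\eta)=\Tilde R_m(\eta,\xi)$, this regroups as $\sum_\xi K_m(x,\xi)K_m(y,\xi)$, which is the claim. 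The cases where $x$ or $y$ lies outside $V(\Gamma_m)$ are automatic, since both sides vanish by the convention $P_m=0$ there.

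The main obstacle is the bookkeeping of variables. Lemma \ref{Var} has the Poisson kernel acting in the first variable of $G_n$, while the theorem is symmetric in $x$ and $y$. Converting $G_m(\eta,y)/\pi(y)$ into $\Tilde G_m(y,\eta)$ and then Poisson-extending in $y$ must be done with the correct placement of the $\pi$ weights. The symmetry \eqref{eq:G-symm-1} is precisely what makes the normalized kernel $\Tilde G_m$ symmetric, and I would check that step carefully. I would also verify separately that the base case $m=0$ is consistent with the stated definitions of $P_0$ and $\Tilde R_0$.
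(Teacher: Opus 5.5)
Your proposal is correct and follows essentially the same route as the paper. Both arguments apply Lemma \ref{Var}, use the symmetry \eqref{eq:G-symm-1} to move the pole, Poisson-extend in the second point, substitute \eqref{eq:R-eq-1}, regroup using the symmetry of $\Tilde R_m$, and telescope over the layers. The only cosmetic difference is that you absorb the base layer $m=0$ into the telescoping sum via $G_{-1}\equiv 0$, using that $P_0$ is the identity on $V(\gamma_0)$, whereas the paper checks the $m=0$ term separately via $K_0=\Tilde R_0$.
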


\proof[Proof]
In view of Lemma \ref{Var} and the discrete harmonicity of the Green function off the 
prescribed pole, combined with the symmetry \eqref{eq:G-symm-1} and the 
identity \eqref{eq:R-eq-1}, we have that
\begin{multline}
\notag
G_{n}(x,y)-G_{n-1}(x,y)=\sum_{\xi\in V(\gamma_n)}
P_{n}(x,\xi)G_n(\xi,y)=
\sum_{\xi\in V(\gamma_{n})}P_{n}(x,\xi)
G_{n}(y,\xi)\frac{\pi(y)}{\pi(\xi)}
\\
= \sum_{\xi\in V(\gamma_{n})}
P_{n}(x,\xi)\sum_{\eta\in V(\gamma_{n})}P_{n}(y,\eta)
G_{n}(\eta,\xi)\frac{\pi(y)}{\pi(\xi)}
\\
=\pi(y)\sum_{\xi\in V(\gamma_{n})}
P_{n}(x,\xi)\sum_{\eta\in V(\gamma_{n})}P_{n}(y,\eta)
\sum_{\sigma \in V(\gamma_{n})}\Tilde{R}_n(\eta,\sigma)\Tilde{R}_n(\sigma,\xi)
\\
=\pi(y)\sum_{\sigma\in V(\gamma_{n})}\sum_{\xi\in V(\gamma_{n})}
P_{n}(x,\xi)\Tilde{R}_n(\sigma,\xi)\sum_{\eta \in V(\gamma_{n})}
P_{n}(y,\eta)\Tilde{R}_n(\eta,\sigma)
\\
=\pi(y)
\sum_{\sigma\in V(\gamma_{n})}K_{n}(x,\sigma)K_{n}(y,\sigma).
\end{multline}
Next, by summing both sides over $n$, we obtain
\begin{equation}
G_{n}(x,y)-G_0(x,y)=\pi(y)\sum_{m=1}^{n}
\sum_{\sigma\in V(\gamma_{m})}K_{m}(x,\sigma)K_{m}(y,\sigma).
\end{equation}
Note that for $\sigma\in V(\gamma_0)$ we have that 
\begin{equation}
K_0(x,\sigma)=\sum_{\eta\in V(\gamma_0)}P_0(\eta,x)\Tilde{R}_0(\eta,\sigma)=
\Tilde{R}_0(x,\sigma),
\end{equation}
and, consequently, it follows that
\begin{equation}
\pi(y) \sum_{\sigma\in V(\gamma_0)}K_0(x,\sigma) K_0(y,\sigma)
=\pi(y)\sum_{\sigma\in V(\gamma_0)}\Tilde{R}_0(x,\sigma)\Tilde{R}_0(y,\sigma) 
=\pi(y)\Tilde{G}_0(x,y)=G_0(x,y).
\end{equation}
In combination with the already obtained identity, we arrive at the decomposition
\begin{equation}
G_{n}(x,y)=\pi(y)\sum_{m=0}^{n}
\sum_{\sigma\in V(\gamma_{m})}K_{m}(x,\sigma)K_{m}(y,\sigma).
\end{equation}
Since the vertex set $V(\Gamma_n)$ of the foliation cluster is the disjoint union of 
the vertex sets of the foliating layers $V(\gamma_m)$, $m=0,\ldots,n$, this formula 
is identical with that of the assertion of the theorem.
\endproof

\subsection{Definition of the Hadamard operator}

\noindent We now introduce the \emph{Hadamard operator} 
$\mathbf{Q}_n:\ell^2(V(\Gamma_n))\to \ell^2(V(\Gamma_n))$ as given by the 
formula
\begin{equation}
\mathbf{Q}_nf(x):=\sum_{y\in V(\Gamma_n)}K_{t(y)}(x,y)f(y).
\end{equation}
Its adjoint with respect to the standard $\ell^2(V(\Gamma_n))$-duality is then
\begin{equation}
\mathbf{Q}_n^*f(x)=
\sum_{y\in V(\Gamma_n)}{K_{t(x)}(y,x)}f(y),\qquad x\in V(\Gamma_n).
\end{equation}
We note that for $a\in V(\Gamma_n)$, a calculation gives that
\begin{equation}
\mathbf{Q}_n\mathbf{Q}_n^*\delta_a(z)=\sum_{\xi\in V(\Gamma_n)}
K_{t(\xi)}(z,\xi)K_{t(\xi)}(a,\xi)=\frac{G_n(z,a)}{\pi(a)}=
\Tilde{G}_n(z,a)=\Tilde{\mathbf{G}}_n\delta_a(z), \quad a\in V(\Gamma_n),
\end{equation}
where the middle identity relies on the discrete Hadamard variation formula
of Theorem \ref{Hadamard}.
Since $a\in V(\Gamma_n)$ is arbitrary, it is now immediate that 
$\mathbf{Q}_n\mathbf{Q}^*_n=\Tilde{\mathbf{G}}_n$. This property is a
basic ingredient in the proof of the following theorem.

\begin{theorem}
The Hadamard operator $\mathbf{Q}_n$ is an isometric isomorphism 
$\ell^2(V(\Gamma_n))\to H_0^1(V(\Gamma_n))$.
\label{thm:discreteHN}
\end{theorem}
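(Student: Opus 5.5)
The plan is to derive the isometry from the identity $\mathbf{Q}_n\mathbf{Q}_n^*=\Tilde{\mathbf{G}}_n$ just established, and then conclude bijectivity by a dimension count. Throughout, $H_0^1(V(\Gamma_n))$ and $\ell^2(V(\Gamma_n))$ are the same finite-dimensional vector space; only the inner products differ. First, since $\Tilde{\mathbf{G}}_n=\mathbf{\Delta}_n^{-1}$ is invertible on $\ell^2(V(\Gamma_n))$, the operator $\mathbf{Q}_n\mathbf{Q}_n^*$ is invertible. Hence $\mathbf{Q}_n$ is surjective, and since it is a square matrix, it is invertible. The same holds for $\mathbf{Q}_n^*$.

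Next we compute the $\nabla$-inner product of two images. For $f,g\in\ell^2(V(\Gamma_n))$, using $\langle u,v\rangle_{\nabla,U}=\langle\mathbf{\Delta}_U u,v\rangle_U$ on $H^1_0(U)$ (Proposition \ref{ModLapl}), we have
\[
\langle \mathbf{Q}_nf,\mathbf{Q}_ng\rangle_{\nabla,V(\Gamma_n)}=\langle \mathbf{Q}_n^*\mathbf{\Delta}_n\mathbf{Q}_nf,g\rangle_{V(\Gamma_n)} .
\]
So it suffices to show that $\mathbf{Q}_n^*\mathbf{\Delta}_n\mathbf{Q}_n=\mathbf{id}$. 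Inverting $\mathbf{Q}_n\mathbf{Q}_n^*=\mathbf{\Delta}_n^{-1}$ gives $(\mathbf{Q}_n^*)^{-1}\mathbf{Q}_n^{-1}=\mathbf{\Delta}_n$, that is, $\mathbf{\Delta}_n=(\mathbf{Q}_n^*)^{-1}\mathbf{Q}_n^{-1}$. Substituting, we get $\mathbf{Q}_n^*\mathbf{\Delta}_n\mathbf{Q}_n=\mathbf{Q}_n^*(\mathbf{Q}_n^*)^{-1}\mathbf{Q}_n^{-1}\mathbf{Q}_n=\mathbf{id}$. Thus $\mathbf{Q}_n$ preserves inner products and is bijective, which is exactly an isometric isomorphism $\ell^2(V(\Gamma_n))\to H_0^1(V(\Gamma_n))$.

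A small preliminary point is to check that $\mathbf{Q}_nf$ really lies in $H_0^1(V(\Gamma_n))$, i.e. is supported in $V(\Gamma_n)$. This holds because $K_m(x,\xi)$ vanishes when $x\notin V(\Gamma_m)\subset V(\Gamma_n)$, since $P_m(x,\eta)=0$ off $V(\Gamma_m)$.

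The main obstacle is bookkeeping rather than substance. The identity $\mathbf{Q}_n\mathbf{Q}_n^*=\Tilde{\mathbf{G}}_n$ relied on Theorem \ref{Hadamard}, which was stated for $x,y\in V$, with $\pi(y)$ normalization. One must confirm that the adjoint is taken with respect to the unweighted $\ell^2$ pairing, consistent with $\Tilde{\mathbf{G}}_n$ being the inverse of $\mathbf{\Delta}_n$ in that same pairing. The symmetry of $\Tilde G_n$ was established in \eqref{eq:G-symm-1}, so this holds, and the argument above closes.
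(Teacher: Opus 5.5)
Your proof is correct, and it takes a shorter route than the paper's. Both arguments rest on the identity $\mathbf{Q}_n\mathbf{Q}_n^*=\tilde{\mathbf{G}}_n=\mathbf{\Delta}_n^{-1}$ coming from the Hadamard formula. The difference is how one passes from it to $\mathbf{Q}_n^*\mathbf{\Delta}_n\mathbf{Q}_n=\mathbf{id}$.

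The paper conjugates by the square root $(\mathbf{\Delta}_n)^{1/2}$ and shows that $\mathbf{Q}_n^*(\mathbf{\Delta}_n)^{1/2}$ is an isometry of $\ell^2(V(\Gamma_n))$. It concludes that $(\mathbf{\Delta}_n)^{1/2}\mathbf{Q}_n$ is a partial isometry. It then proves that this operator has trivial kernel by peeling off foliation layers one at a time. This works because $\mathbf{Q}_n$ is block triangular with respect to the layers ($K_m(x,y)=0$ when $t(x)>m$), its diagonal blocks are $\tilde{\mathbf{R}}_m$, and these are invertible by Proposition~\ref{prop:Green-inv-1}.

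You instead obtain invertibility of $\mathbf{Q}_n$ directly from invertibility of $\mathbf{Q}_n\mathbf{Q}_n^*$ and a dimension count, and then invert the identity. Your version needs neither $(\mathbf{\Delta}_n)^{1/2}$ nor Proposition~\ref{prop:Green-inv-1}. Combined with the block-triangular structure, it even yields invertibility of every $\tilde{\mathbf{R}}_m$, hence of $\tilde{\mathbf{G}}^{\langle m\rangle}$, as a corollary.

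For the bare isometry claim, the paper's kernel step is in fact redundant in finite dimensions: a square matrix $A$ with $A^*A=\mathbf{id}$ already satisfies $AA^*=\mathbf{id}$. What the paper's layer-by-layer argument does buy is an explicit view of the triangular structure of $\mathbf{Q}_n$ relative to the foliation. That same structure gives the stability $\mathbf{Q}_{k_1}f=\mathbf{Q}_{k_2}f$ used in Theorem~\ref{t:main51}.
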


\begin{proof}
We recall the notational convention 
$\mathbf{\Delta}_{n}:=\mathbf{\Delta}_{V(\Gamma_n)}$, and observe that
there is a well-defined canonical square root $(\mathbf{\Delta}_n)^{\frac12}$ 
which acts as a symmetric matrix with real entries on the finite-dimensional 
space $\ell^2(V(\Gamma_n))$.
For $f,g \in \ell^2(V(\Gamma_n))$, we check that
\begin{multline}
\big\langle \mathbf{Q}_n^*(\mathbf{\Delta}_n)^\frac{1}{2}f,
\mathbf{Q}_n^*(\mathbf{\Delta}_n)^\frac{1}{2}g\big\rangle_{V(\Gamma_n)}
=\big\langle \mathbf{Q}_n\mathbf{Q}_n^*(\mathbf{\Delta}_n)^\frac{1}{2}f,
(\mathbf{\Delta}_n)^\frac{1}{2}g\big\rangle_{V(\Gamma_n)}
\\
=\big\langle \Tilde{\mathbf{G}}_n(\mathbf{\Delta}_n)^\frac{1}{2}f,
(\mathbf{\Delta}_n)^\frac{1}{2}g\big\rangle_{V(\Gamma_n)}
=\big\langle(\mathbf{\Delta}_n)^\frac{1}{2}
\Tilde{\mathbf{G}}_n(\mathbf{\Delta}_n)^\frac{1}{2}f,g
\big\rangle_{V(\Gamma_n)}
=\langle f,g \rangle_{V(\Gamma_n)},
\end{multline}
since $\Tilde{\mathbf{G}}_n=\mathbf{\Delta}^{-1}_n$ holds.
This entails that the operator 
$\mathbf{Q}_n^*(\mathbf{\Delta}_n)^\frac{1}{2}:\ell^2(V(\Gamma_n)) 
\to \ell^2(V(\Gamma_n))$ is an isometry, which immediately entails that the 
adjoint operator 
$(\mathbf{\Delta}_n)^\frac{1}{2} \mathbf{Q}_n:\ell^2(V(\Gamma_n)) 
\to \ell^2(V(\Gamma_n))$ is a partial isometry, and, in particular, a contraction. 
Suppose we can show that the kernel (or null space) of 
$(\mathbf{\Delta}_n)^\frac{1}{2} \mathbf{Q}_n$ is trivial. Then the operator
$(\mathbf{\Delta}_n)^\frac{1}{2} \mathbf{Q}_n$ must be an isometry, in which case
\begin{equation}
\langle \mathbf{Q}_nf,\mathbf{Q}_ng\rangle_{\nabla,\Gamma_n}
=\big\langle(\mathbf{\Delta}_n)^{\frac{1}{2}}\mathbf{Q}_nf,
(\mathbf{\Delta}_n)^{\frac{1}{2}}\mathbf{Q}_ng\big\rangle_{\Gamma_n}
=\langle f,g\rangle_{\Gamma_n}.
\end{equation}
In other words, we would have shown that the operator 
$\mathbf{Q}_n:\ell^2(\Gamma_n)\to H_0^1(\Gamma_n)$ defines an isometric 
isomorphism, as claimed. We assume that 
$f_0\in\ell^2(V(\Gamma_n))$ with 
\[
(\mathbf{\Delta}_n)^\frac{1}{2} \mathbf{Q}_n f_0=0,
\]
and intend to show that $f_0=0$.
Next, since $\mathbf{\Delta}_n$ is invertible on $\ell^2(V(\Gamma_n))$, 
we know that $(\mathbf{\Delta}_n)^{\frac{1}{2}}$ is invertible too, and,
consequently, that $\mathbf{Q}_n f_0=0$.  From the definition of the Hadamard
operator, we see that
\[
\mathbf{Q}_n f_0(x)=\sum_{y\in V(\Gamma_n)}K_{t(y)}(x,y)f_0(y)=
\sum_{y\in V(\gamma_n)}K_{n}(x,y)f_0(y),\qquad x\in V(\gamma_n),
\]
since $K_m(x,y)=0$ if $x\in V(\gamma_n)$ and 
$y\in V(\gamma_m)$ with $m<n$.
Moreover, 
\[
K_n(x,y)=\sum_{\eta\in V(\gamma_n)}P_n(x,\eta)\Tilde{R}_n(\eta,y)=
\Tilde{R}_n(x,y),\qquad x,y\in V(\gamma_n),
\]
since the Poisson kernel at the discrete boundary is rather trivial: 
$P_n(x,\eta)=\delta_x(\eta)$ for $x,\eta\in V(\gamma_n)$.
As a consequence, we have that
\[
\mathbf{Q}_n f_0(x)=
\sum_{y\in V(\gamma_n)}\Tilde{R}_{n}(x,y)f_0(y)
=\Tilde{\mathbf{R}}_n[f^{\langle n\rangle}_0](x),\qquad x\in V(\gamma_n),
\]
where we use the notation $f^{\langle n\rangle}_0$ to denote the restriction
of $f_0$ to the foliation layer $V(\gamma_n)$. From our assumption we know
that $\mathbf{Q}_n f_0=0$, and hence it follows that 
$\Tilde{\mathbf{R}}_n[f^{\langle n\rangle}_0]=0$. Next, since 
$\Tilde{\mathbf{R}}_n$ is invertible in view of Proposition 
\ref{prop:Green-inv-1} and the definition of $\Tilde{\mathbf{R}}_n$ as the
canonical self-adjoint square root of $\Tilde{\mathbf{G}}^{\langle n\rangle}$,
it is immediate that $f^{\langle n\rangle}_0=0$, which means that $\supp f_0
\subset V(\Gamma_{n-1})$. But then we may observe that 
\[
\mathbf{Q}_n f_0(x)=\sum_{y\in V(\Gamma_{n})}K_{t(y)}(x,y)f_0(y)=
\sum_{y\in V(\gamma_{n-1})}K_{n-1}(x,y)f(y),\qquad x\in V(\gamma_{n-1}),
\]
which puts us in the analogous situation in the preceding foliation layer
$\gamma_{n-1}$. If we proceed we the same arguments as for the layer 
$\gamma_n$, we obtain that $\supp f_0\subset V(\Gamma_{n-2})$. 
By iteration $n$ steps, we obtain that $\supp f_0\subset V(\gamma_0)$, 
and if we repeat the same argument once more, we find that 
$\Tilde{\mathbf{R}}_n f_0^{\langle 0\rangle}]=0$ which in turn gives  
$f_0^{\langle 0\rangle}=0$ and hence that $f_0=0$ identically. 
The proof of the theorem is now complete.
\end{proof}

\section{Growing the Discrete Gaussian Free Field
using white noise as building blocks}\label{S:GDGFF}

\subsection{Formulation of the main result}

In this Section we formulate and prove our main result, which is analogous to 
\cite[Theorem 5.1]{HN}. 
In terms of notation, we write 
\[
\mathbf{Q}_nf = \mathbf{Q}_n[\mathds{1}_{V(\Gamma_n)}f]
\]
for a function $f:V\to\mathbb{R}$ not necessarily supported on $V(\Gamma_n)$. 

\begin{theorem}\label{t:main51}
Let $\Phi$ denote the White Noise Field on the graph $\Gamma_N$, where 
$N=0,1,2,\ldots$ is given. For $n=0,1,\ldots,N$, we define the stochastic fields 
 $\Psi_n=\mathbf{Q}_n\Phi$. Then $\Psi_n$ is the Gaussian Free Field on 
 $\Gamma_n$. Moreover,  the increments of the process $\Psi_n$ with respect to 
 the parameter $n$ are independent. 
 More precisely, if $n_i\in\mathbb{Z}_{\ge0}$ are listed increasingly with
 $0\leq n_0<...< n_k\leq N$,
 then the $k$ random vectors 
 $(\Psi_{n_{i}}(x)-\Psi_{n_{i-1}}(x))_{x\in V(\Gamma_N)}$
 for $i=1,\ldots,k$ are all stochastically independent.   
\end{theorem}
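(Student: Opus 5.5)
The plan is to exploit the explicit form $\Psi_n=\mathbf{Q}_n\Phi$ together with the structure of the kernels $K_m$. Write $\Phi=\sum_{y\in V(\Gamma_N)}\xi_y\delta_y$ with $\{\xi_y\}$ independent standard Gaussians; the distribution of the white noise field does not depend on the basis, so this choice is legitimate. Then
\[
\Psi_n(x)=\sum_{y\in V(\Gamma_n)}K_{t(y)}(x,y)\,\xi_y=\sum_{y\in V(\Gamma_n)}\xi_y\,\mathbf{Q}_n\delta_y(x).
\]
By Theorem \ref{thm:discreteHN}, $\{\mathbf{Q}_n\delta_y\}_{y\in V(\Gamma_n)}$ is an orthonormal basis of $H^1_0(V(\Gamma_n))$, since $\mathbf{Q}_n$ is an isometric isomorphism $\ell^2(V(\Gamma_n))\to H^1_0(V(\Gamma_n))$ and $\{\delta_y\}$ is an orthonormal basis of the former. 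Hence $\Psi_n$ is by definition the $\mathrm{GFF}_0$ on $\Gamma_n$. Alternatively, one can check directly that $\cov(\Psi_n(x),\Psi_n(z))=\mathbf{Q}_n\mathbf{Q}_n^*\delta_z(x)=\tilde G_n(x,z)$, which matches the covariance in the earlier proposition.

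For the independence of increments, the key observation is that the columns of $\mathbf{Q}_n$ are stable in $n$. Indeed, for $y\in V(\gamma_m)$ with $m\le n$, the function $x\mapsto K_m(x,y)=\sum_{\eta}P_m(x,\eta)\tilde R_m(\eta,y)$ does not depend on $n$. It is supported in $V(\Gamma_m)$, because $P_m(x,\cdot)=0$ for $x\notin V(\Gamma_m)$. Thus $\mathbf{Q}_n\delta_y=\mathbf{Q}_m\delta_y$ whenever $y\in V(\Gamma_m)$ and $m\le n$, which is the discrete analogue of the stability property \eqref{e:qp3}. It follows that
\[
\Psi_{n_i}(x)-\Psi_{n_{i-1}}(x)=\sum_{y\in V(\Gamma_{n_i})\setminus V(\Gamma_{n_{i-1}})}\xi_y\,K_{t(y)}(x,y),
\]
so the $i$-th increment vector is a deterministic linear function of the family $\{\xi_y: n_{i-1}<t(y)\le n_i\}$.

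Since the layers are disjoint (property $(b)$ of a discrete foliation), these index families are pairwise disjoint for $i=1,\ldots,k$. The increment vectors are therefore measurable functions of disjoint subfamilies of independent Gaussians, and hence mutually independent. If one also wants $\Psi_{n_0}$ to be independent of the increments, the same argument applies with the family $\{\xi_y: t(y)\le n_0\}$. Jointly Gaussian vectors with vanishing cross-covariances give an equivalent route.

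The only step needing care is the stability claim, namely that $K_{t(y)}(\cdot,y)$, viewed as a function on $V$, is literally the same for every $n\ge t(y)$. This is immediate from the definition, because $K_m$ involves only $P_m$ and $\tilde R_m$, which are defined in terms of $\Gamma_m$ and $\gamma_m$ alone. No step is a serious obstacle: the heavy lifting was done in Theorem \ref{Hadamard} and Theorem \ref{thm:discreteHN}.
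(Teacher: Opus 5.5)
Your proposal is correct and follows the paper's proof essentially step for step. You pass to the Kronecker-delta basis and invoke Theorem~\ref{thm:discreteHN} to see that $\{\mathbf{Q}_n\delta_y\}_{y\in V(\Gamma_n)}$ is an orthonormal basis of $H^1_0(V(\Gamma_n))$. You then use the stability $\mathbf{Q}_{k_1}f=\mathbf{Q}_{k_2}f$ for $f\in\ell^2(V(\Gamma_{k_1}))$ to write each increment as a linear function of a disjoint block of the $\xi_y$. Your explicit justification of that stability fills in what the paper simply calls ``clear from the definition'': the column $K_{t(y)}(\cdot,y)$ involves only $P_{t(y)}$ and $\tilde R_{t(y)}$ and is supported in $V(\Gamma_{t(y)})$.
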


\proof[Proof]
Since the distribution of $\Phi$ does not depend on the orthonormal basis 
of $\ell^2(\Gamma_N)$ that defines the field, we may as well assume use the
Kronecker delta basis, so that
\begin{equation}
 \Phi(x)=\sum_{y \in V(\Gamma_N)}\xi_y\, \delta_y(x),\qquad x\in V(\Gamma_N),
\end{equation} 
where, as usual, $\{\xi_y\}_{y \in V(\Gamma_{N})}$ are independent standard 
real Gaussian random variables. It then follows that 
\begin{equation}
\Psi_n(x)= \mathbf{Q}_n\Phi(x)=\sum_{y \in V(\Gamma_N)}\xi_y\,
 \mathbf{Q}_n\delta_y(x)
 ,\qquad x\in V(\Gamma_N).
\end{equation}
According to Theorem \ref{thm:discreteHN}, 
$\mathbf{Q}_n:\ell^2(V(\Gamma_n)) \to H_0^1(V(\Gamma_n))$ is an isometric 
isomorphism, and, consequently,
the set $\{\mathbf{Q}_n\delta_y\}_{y\in V(\Gamma_n)}$ forms an orthonormal 
basis of $H_0^1(\Gamma_n)$. So, $\Psi_n$ is indeed the Gaussian Free Field 
on $\Gamma_n$.
Next, as to the increments, we have 
\begin{equation}
 \begin{gathered}
\Psi_{n_{i}}(x)-\Psi_{n_{i-1}}(x)=\sum_{y\in V(\Gamma_{n_{i}})\setminus 
V(\Gamma_{n_{i-1}})} \xi_y\, \mathbf{Q}_{n_{i}}\delta_y(x),
\end{gathered}
\end{equation}
as it is clear from the definition of $Q_n$ that for $0\leq k_1<k_2$ and 
$f\in \ell^2(V(\Gamma_{k_1}))\subset\ell^2(V(\Gamma_{k_2}))$, we have 
$\mathbf{Q}_{k_1}f=\mathbf{Q}_{k_2}f$. 
The required independence follows from the stochastic independence of the
individual Gaussians in the family $\{\xi_y\}_y$. 
\endproof

\section{Properties of the constructed Gaussian free field $\Psi_n$}
\label{S:PDGFF}

\subsection{Harmonically extended weighted white noise as time-step increments}

In this Section we discuss some interesting properties of the Gaussian Free Field 
constructed in Theorem \ref{t:main51}. 
We first consider the discrete time-derivative of $\Psi_n$, the Gaussian Free Field 
on the foliation cluster $\Gamma_n$ produced from the White Noise $\Phi$ on the 
bigger graph $\Gamma_N$,  $0<n\leq N$. For any $x\in V$, we have that 
\begin{multline}
\notag
 \Psi_{n}(x)-\Psi_{n-1}(x)=\mathbf{Q}_{n}\Phi(x)-\mathbf{Q}_{n-1}\Phi(x)=
 \sum_{y\in V(\gamma_n)}K_n(x,y)\Phi(y)
 \\
 =\sum_{y\in V(\gamma_n)}
 \sum_{\eta\in V(\gamma_n)}P_n(x,\eta)\Tilde{R}_n(\eta,y)\Phi(y)=
 \sum_{\eta\in V(\gamma_n)}P_n(x,\eta) \sum_{y\in V(\gamma_n)}
 \Tilde{R}_n(\eta,y)\Phi(y).
\end{multline}
While the left-hand side expresses the discrete time derivative of $\Psi_n$, the
right-hand side can be understood as the Poisson extension of a weighted 
White Noise Field on the foliation layer $\gamma_n$. 
Note that this result finds its two-dimensional smooth continuous analogue in 
\cite{HN}. Also, it is not hard to see that a similar computation for the difference 
$\Psi_n -\Psi_m$ with $0\leq m <n \leq N$ gives the well known Markov 
property of the Gaussian Free Field.

\subsection{The connection with Brownian motion}
We turn to another property of the constructed $\GFF$. First, we fix a vector 
$f\in \ell^2(\Gamma_N)$. Then the random vector 
$\left(\langle f,\Psi_n\rangle_{\Gamma_n}\right)_n$ where $n=0,\ldots,N$ has 
a Gaussian distribution (meaning that the entries are jointly Gaussian). 
We will compute its distribution explicitly, moreover we show that it is the same 
as that of a weighted one-dimensional random walk (cf. the corresponding 
result in \cite{HN}).

\begin{proposition}
\label{PropertyGFF} 
For $f\in\ell^2(V(\Gamma_N))$, we 
have that
\begin{equation}
\big(\langle f,\Psi_n\rangle_{V(\Gamma_n)}\big)_{n} \overset{d}
=\Big( B\big(\|\mathbf{Q}^*_nf\|^2_{V(\Gamma_n)}\big) 
\Big)_{n} \overset{d}=\bigg(\sum_{k=0}^n 
\big(B(k+1)-B(k) \big) \|\mathds{1}_{V(\gamma_k)}
\mathbf{Q}^*_n f\|_{V(\Gamma_n)}\bigg)_{n}
\end{equation}
where $n$ ranges over $\{0,\ldots,N\}$ and "$\overset{d}=$" means equality in 
probability distribution, while $B(\cdot)$ stands for a standard continuous Brownian 
motion on $\mathbb{R}_{\ge0}$ with initial value $B(0)=0$.
\end{proposition}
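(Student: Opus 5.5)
The plan is to reduce everything to an explicit linear combination of the independent Gaussians $\xi_y$, and then compare covariance structures. All three processes are centered Gaussian vectors indexed by $n\in\{0,\ldots,N\}$. It therefore suffices to show that they have the same covariance matrix. Following the proof of Theorem \ref{t:main51}, I write $\Phi=\sum_{y\in V(\Gamma_N)}\xi_y\delta_y$. Since $\Psi_n=\mathbf{Q}_n\Phi=\mathbf{Q}_n[\mathds{1}_{V(\Gamma_n)}\Phi]$, duality gives
\[
X_n:=\langle f,\Psi_n\rangle_{V(\Gamma_n)}=\langle \mathbf{Q}_n^*f,\Phi\rangle_{V(\Gamma_n)}=\sum_{y\in V(\Gamma_n)}(\mathbf{Q}_n^*f)(y)\,\xi_y .
\]

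The key structural step, which I expect to be the only real point needing care, is to show that $\mathbf{Q}_n^*f$ is just a restriction of one fixed function. Put $g:=\mathbf{Q}_N^*f$. For $x\in V(\gamma_k)$ with $k\le n$, we have $\mathbf{Q}_n^*f(x)=\sum_{y\in V(\Gamma_n)}K_k(y,x)f(y)$. Moreover $K_k(y,x)=\sum_{\eta}P_k(y,\eta)\tilde R_k(\eta,x)$ vanishes for $y\notin V(\Gamma_k)$, because $P_k(y,\cdot)=0$ off $V(\Gamma_k)$. Hence the sum effectively runs over $V(\Gamma_k)$ and does not depend on $n\ge k$. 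Therefore $\mathbf{Q}_n^*f=\mathds{1}_{V(\Gamma_n)}g$. In particular $\mathds{1}_{V(\gamma_k)}\mathbf{Q}_n^*f=\mathds{1}_{V(\gamma_k)}g$ for $k\le n$. This is the same stability phenomenon $\mathbf{Q}_{k_1}=\mathbf{Q}_{k_2}$ on $\ell^2(V(\Gamma_{k_1}))$ used in Theorem \ref{t:main51}, now on the adjoint side.

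Consequently $X_n=\sum_{k=0}^n Y_k$, where $Y_k:=\sum_{y\in V(\gamma_k)}g(y)\xi_y$. The $Y_k$ are independent, since they involve disjoint families of $\xi_y$, and $Y_k\sim\mathcal N(0,\|\mathds{1}_{V(\gamma_k)}g\|^2)$. Thus, for $m\le n$,
\[
\cov(X_m,X_n)=\sum_{k=0}^m\|\mathds{1}_{V(\gamma_k)}g\|^2=\|\mathbf{Q}_m^*f\|^2_{V(\Gamma_m)} .
\]
Set $s_n:=\|\mathbf{Q}_n^*f\|^2_{V(\Gamma_n)}$. This sequence is nondecreasing in $n$, so the covariance equals $\min(s_m,s_n)$. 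That is exactly the covariance of $(B(s_n))_n$, which gives the first equality in distribution.

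For the second equality, the increments $B(k+1)-B(k)$ are i.i.d.\ $\mathcal N(0,1)$. Hence $Z_n:=\sum_{k\le n}(B(k+1)-B(k))\,\|\mathds{1}_{V(\gamma_k)}g\|$ has independent summands with variances $\|\mathds{1}_{V(\gamma_k)}g\|^2$. It therefore has the same covariance as $(X_n)$. By the identity from the key step, $\|\mathds{1}_{V(\gamma_k)}\mathbf{Q}_n^*f\|=\|\mathds{1}_{V(\gamma_k)}g\|$ for $k\le n$, so the coefficients in the statement agree with those in $Z_n$. This completes the argument.
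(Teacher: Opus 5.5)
Your proof is correct and follows essentially the same route as the paper. Both write $\langle f,\Psi_n\rangle_{V(\Gamma_n)}=\langle \mathbf{Q}_n^*f,\Phi\rangle$, use the consistency $\mathbf{Q}_n^*f=\mathds{1}_{V(\Gamma_n)}\mathbf{Q}_m^*f$ for $n\le m$ to get covariance $\|\mathbf{Q}_{\min(m,n)}^*f\|^2$, and match this with the Brownian covariances. Your version is more careful than the paper's: you justify that consistency by the vanishing of $P_k(y,\cdot)$ off $V(\Gamma_k)$, and you note the monotonicity of $s_n$ needed to write the covariance as $\min(s_m,s_n)$, both of which the paper leaves implicit.
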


\proof[Proof]
For brevity of notation, we write 
\[
F_n(f):=\langle f,\Psi_n\rangle_{V(\Gamma_n)}=\sum_{y\in V(\Gamma_n)}
\xi_y\,f(y),
\] 
where we recall that the $\xi_y$ are all independent standard Gaussian random
variables. We find that 
\[
\mathbb{E}\,F_n(f)=\sum_{y\in V(\Gamma_n)}
f(y)\,\mathbb{E}\,\xi_y=0
\]
and 
\[
F_n(f)=\langle f,\Psi_n\rangle_{V(\Gamma_n)}
=\langle f,\mathbf{Q}_n\Phi\rangle_{V(\Gamma_n)}
=\langle\mathbf{Q}^*_nf, 
\Phi\rangle_{V(\Gamma_N)}.
\]
In addition to the first moments, we shall need the second moments as well.
For $n\leq m \leq N$, we calculate that
\begin{multline}
\notag
\mathbb{E}(F_n(f)F_m(f))=
\mathbb{E}\big(\langle\mathbf{Q}^*_nf, 
\Phi \rangle_{V(\Gamma_N)}\langle\mathbf{Q}^*_mf, 
\Phi \rangle_{V(\Gamma_N)}\big)
\\
=\langle \mathbf{Q}^*_nf,\mathbf{Q}^*_m f\rangle_{V(\Gamma_N)} 
= \sum_{x\in V(\Gamma_N)}\mathbf{Q}^*_nf(x)\mathbf{Q}^*_mf(x).
\end{multline}
We note that $\mathbf{Q}^\ast_n f=0$ holds on 
$V(\Gamma_N)\setminus V(\Gamma_n)$, while $\mathbf{Q}^\ast_n f=
\mathbf{Q}^\ast_m f$ holds on $V(\Gamma_n)$. 
so that by the above calculation, it follows that 
\begin{equation}
\mathbb{E}(F_n(f)F_m(f))=\langle\mathbf{Q}^*_nf,\mathbf{Q}^*_n f
\rangle_{\Gamma_N}
=\|\mathbf{Q}^\ast_n f\|^2_{V(\Gamma_N)}=
\|\mathbf{Q}^\ast_n f\|^2_{V(\Gamma_n)},\qquad n\le m\le N.
\end{equation}
The first and second moment structure of a standard Brownian motion $B(t)$ 
with $B(0)=0$ is 
\[
\mathbb{E}\,B(t)=0\,\,\,\text{ and }\,\,\,\mathbb{E}\,(B(t)B(t'))=\min\{t,t'\}
\]
for $t,t'\in\mathbb{R}_{\ge0}$. This means that 
$\big(B(\|\mathbf{Q}^*_nf\|^2_{V(\Gamma_n)})\big)_{n}$ has the same first and
second moments (including covariances) as $(F_n(f))_n$, so that by (joint) normality,
the first equality in distribution follows. 
Next, we put
\begin{equation}
W_n(f):=\sum_{k=0}^n\big(B(k+1)-B(k)\big)\|
\mathds{1}_{V(\gamma_k)}\mathbf{Q}^*_n f\big\|_{V(\Gamma_n)}. 
\end{equation}
Then $\mathbb{E}W_n(f)=0$, and regarding the second moments, we find that
for $n\leq m \leq N$, 
\begin{multline}
 \mathbb{E}( W_n(f)W_m(f)) = 
 \sum_{k=0}^n 
 \big\|\mathds{1}_{V(\gamma_k)}\mathbf{Q}^*_n f\big\|_{V(\Gamma_n)}
 \|\mathds{1}_{V(\gamma_k)}\mathbf{Q}^*_m f\big\|_{V(\Gamma_m)}
\\
=\sum_{k=0}^{n} \big\|\mathds{1}_{V(\gamma_k)}\mathbf{Q}^*_n f
\big\|_{V(\Gamma_n)}^2
= \big\|\mathbf{Q}^*_n f\big\|^2_{V(\Gamma_n)}.
\end{multline}
The second equality in distribution now follows as well.
\endproof

We recall the Poisson extension operator 
$\mathbf{P}_n: \ell^2(\gamma_n) \to \ell^2(\Gamma_n)$ given by
\[
\mathbf{P}_nf(x)=\sum_{y\in V(\gamma_n)}P_n(x,y)f(y),\qquad x\in\Gamma_n,
\]
where $P_n=P_{V(\Gamma_n),V(\gamma_n)}$, and denote by 
$\mathbf{P}_n^*$ its adjoint operator $\ell^2(\Gamma_n) \to \ell^2(\gamma_n)$.
It is is sometimes called the \emph{sweep operator}, at least in the continuous context, 
and it is given by the formula
\[
\mathbf{P}_n^\ast f(y)=\sum_{x\in V(\Gamma_n)}P_n(x,y)f(x),
\qquad y\in\gamma_n.
\] 
We are now ready to state a result concerning the behavior of the boundary averages of the 
$\GFF$ process, analogous to what was obtained in the two-dimensional continuous smooth
setting in \cite{HN}. 

\begin{proposition}
Suppose $n_1,n_2\in \{1,\ldots,N\}$ and that $n_1<n_2$. Assume, moreover, that
$f\in\ell^2(\Gamma_{n_1})\subset\ell^2(\Gamma_N)$. 
We then have that
\begin{multline}
\notag
\big(\langle\mathbf{P}_n^*f,\Psi_{n_2}\rangle_{\ell^2(\gamma_n)}  \big)_{n} 
\overset{d}=\Big(B\Big(\big\|\mathds{1}_{V(\Gamma_{n_2})\setminus V(\Gamma_n)}
\mathbf{Q}_{n_2}^\ast f\big\|^2_{V(\Gamma_{n_2})}\Big)\Big)_{n}
\\
\overset{d}
= \Bigg( \sum_{k=n}^{n_2}\big(B(k+1)-B(k)\big)\big\|\mathds{1}_{V(\gamma_{k})}
\mathbf{Q}^*_{n_2}f\big\|_{V(\Gamma_{n_2})}\big)\Bigg)_{n},
\end{multline}
where $n$ ranges over over $n=n_1,\ldots,n_2$. Here, $B(\cdot)$ denotes a standard 
continuous Brownian motion on $\mathbb{R}_{\ge0}$ starting at $B(0)=0$.
\end{proposition}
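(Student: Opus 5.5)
The plan is to reduce the boundary pairing to a pairing of $f$ against an increment of the process $(\Psi_m)_m$. After that, the covariance argument of Proposition \ref{PropertyGFF} can be repeated verbatim.

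\textbf{Step 1: identify $\mathbf{P}_n$ applied to $\Psi_{n_2}$ on the layer.} By definition of the adjoint, $\langle\mathbf{P}_n^*f,\Psi_{n_2}\rangle_{\ell^2(\gamma_n)}=\langle f,\mathbf{P}_n[\Psi_{n_2}|_{V(\gamma_n)}]\rangle_{V(\Gamma_n)}$, and $f$ is supported in $V(\Gamma_{n_1})\subset V(\Gamma_n)$. I claim that
\[
\mathbf{P}_n[\Psi_{n_2}|_{V(\gamma_n)}]=\Psi_{n_2}-\Psi_{n-1}\quad\text{on }V(\Gamma_n).
\]
The increments computation in the proof of Theorem \ref{t:main51} gives $\Psi_{n_2}-\Psi_{n-1}=\sum_{k=n}^{n_2}\sum_{y\in V(\gamma_k)}\xi_y K_k(\cdot,y)$. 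Each $K_k(\cdot,y)=\sum_\eta P_k(\cdot,\eta)\Tilde R_k(\eta,y)$ is harmonic on $V(\Gamma_{k-1})\supset V(\Gamma_{n-1})$. Also $\Psi_{n-1}$ vanishes on $V(\gamma_n)$, so there $\Psi_{n_2}-\Psi_{n-1}=\Psi_{n_2}$. Uniqueness of the Poisson extension on $V(\Gamma_n)$ with boundary layer $V(\gamma_n)$ then gives the claim.

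\textbf{Step 2: rewrite in terms of white noise.} With $g:=\mathbf{Q}^*_{n_2}f$, and using that $\mathbf{Q}^*_{n-1}f$ is the restriction of $g$ to $V(\Gamma_{n-1})$ (the same observation as in Proposition \ref{PropertyGFF}), we get
\[
\langle\mathbf{P}_n^*f,\Psi_{n_2}\rangle_{\ell^2(\gamma_n)}=\langle g-\mathbf{Q}^*_{n-1}f,\Phi\rangle_{V(\Gamma_N)}=\big\langle\mathds{1}_{V(\Gamma_{n_2})\setminus V(\Gamma_{n-1})}\,g,\ \Phi\big\rangle .
\]
These variables are jointly centered Gaussian. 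For $n\le n'$, the covariance is $\|\mathds{1}_{V(\Gamma_{n_2})\setminus V(\Gamma_{n'-1})}g\|^2$, which equals $\sum_{k=n'}^{n_2}\|\mathds{1}_{V(\gamma_k)}g\|^2$. This is the minimum of the two "times", because the time is nonincreasing in $n$. That is exactly the covariance structure of $B$ evaluated at these times, and also that of the sum of independent Brownian increments $\sum_{k=n}^{n_2}(B(k+1)-B(k))\|\mathds{1}_{V(\gamma_k)}g\|$, whose covariance for $n\le n'$ is the sum over $k=n',\dots,n_2$. Joint normality then gives both equalities in distribution.

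\textbf{Main obstacle: indexing in Step 1.} The decomposition naturally removes $V(\Gamma_{n-1})$, i.e.\ it keeps layers $k=n,\dots,n_2$. This matches the range of the second (Brownian-sum) expression. The time argument in the first expression should therefore be read as the norm of $g$ over layers $n,\dots,n_2$, and I would check this index convention carefully.

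I would also need to verify the harmonicity of $K_k(\cdot,y)$ on $V(\Gamma_{n-1})$ for all $k\ge n$, including the top layer $k=n_2$. This holds because $P_k$ is harmonic on $V(\Gamma_k)\setminus V(\gamma_k)=V(\Gamma_{k-1})$.
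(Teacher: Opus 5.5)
Your proposal is correct and follows the paper's own route: you identify $\mathbf{P}_n\Psi_{n_2}$ with $\Psi_{n_2}-\Psi_{n-1}$ on $V(\Gamma_n)$ using harmonicity on $V(\Gamma_{n-1})$ and matching values on $V(\gamma_n)$, which reduces the pairing to $F_{n_2}(f)-F_{n-1}(f)$, and you then match centered Gaussian covariances with the two Brownian expressions (a step you actually carry out, whereas the paper omits it). Your indexing concern is justified: the paper's own covariance $\|\mathbf{Q}^*_{n_2}f\|^2-\|\mathbf{Q}^*_{m-1}f\|^2$ for $n\le m$ shows that the time in the first expression must be $\|\mathds{1}_{V(\Gamma_{n_2})\setminus V(\Gamma_{n-1})}\mathbf{Q}^*_{n_2}f\|^2$, which agrees with the Brownian sum over $k=n,\dots,n_2$, so the $V(\Gamma_n)$ in the displayed statement is an off-by-one slip (with it, the first expression differs in law from the other two whenever $\mathds{1}_{V(\gamma_n)}\mathbf{Q}^*_{n_2}f\neq0$).
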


\proof[Proof]
We write 
\[
A_n(f):=\langle\mathbf{P}_n^*f,\Psi_{n_2}\rangle_{V(\gamma_n)}=
\langle f,\mathbf{P}_n\Psi_{n_2}\rangle_{V(\Gamma_n)},
\]
where it is understood that $\mathbf{P}_n\Psi_{n_2}$ vanishes on 
$V(\Gamma_N)\setminus V(\Gamma_n)$. We identify $\mathbf{P}_n\Psi_{n_2}$ with
$\Psi_{n_2}-\Psi_{n-1}$ on $V(\Gamma_n)$, simply because it is harmonic on 
$V(\Gamma_{n-1})$ and has the same value on the boundary layer $V(\gamma_n)$. 
It now follows that
\[
A_n(f)=\big\langle f, \mathds{1}_{V(\Gamma_{n_1})}(\Psi_{n_2}-\Psi_{n-1})
\big\rangle_{V(\Gamma_{n_1})}=
\langle f, \Psi_{n_2}\rangle_{V(\Gamma_{n_2})}-
\langle f,\Psi_{n-1}\rangle_{V(\Gamma_{n-1})}=F_{n_2}(f)-F_{n-1}(f),
\]
where the notation $F_k(f)$ is an in the proof of Proposition \ref{PropertyGFF}. 
The first moments vanish: $\mathbb{E}(A_n(f))=0$. We proceed to consider the 
second moments. The second moments of $F_k(f)$ are known from the proof of
Proposition \ref{PropertyGFF}, and we can use them to obtain the second moments 
of $A_n(f)$. 
The result is that if $n\le m$,
\[
\mathbb{E}(A_n(f)A_m(f))=\|\mathbf{Q}_{n_2}^\ast\|^2_{V(\Gamma_{n_2})}
-\|\mathbf{Q}_{m-1}^\ast\|^2_{V(\Gamma_{m-1})}.
\]
We these first and second moments with the moments which are obtained for
the other two processes involving Brownian motion. The details are analogous
with the calculations in the proof of Proposition \ref{PropertyGFF} and for this reason
omitted. 
\endproof

\end{document}